\theoremstyle{plain}
\newtheorem*{maintheorem*}{Main Theorem}
\newtheorem*{thm*}{Theorem}
\newtheorem*{thma*}{Theorem A}
\newtheorem*{thmaa*}{Theorem A'}
\newtheorem*{thmb*}{Theorem B}
\newtheorem*{thmo*}{Theorem 1.1}
\newtheorem*{thmc*}{Theorem C}
\newtheorem*{thmd*}{Theorem D}
\newtheorem*{thmf*}{Theorem 4.1}
\newtheorem*{remark*}{Remark}
\newtheorem*{conjecture*}{Conjecture}
\newtheorem*{prop*}{Proposition}
\newtheorem{thm}{Theorem}[section]
\newtheorem{cor}[thm]{Corollary}
\newtheorem{lem}[thm]{Lemma}
\theoremstyle{definition}
\newtheorem*{proofc*}{Proof of Theorem C}
\newtheorem{definition}[thm]{Definition}
\newtheorem{remark}[thm]{Remark}
\def\bbz{\mathbb{Z}}
\def\bbq{\mathbb{Q}}
\def\bbf{\mathbb{F}}
\def\bbr{\mathbb{R}}
\def\bbn{\mathbb{N}}
\def\bfr{\mathfrak{B}}
\def\pfr{\mathfrak{P}}
\def\ybf{\mathbf{y}}
\def\bbf{\mathbf{B}}
\def\dbf{\mathbf{D}}
\def\wbf{\mathbf{w}}
\def\vare{\varepsilon}
\def\e{\mathbf{e}}
\def\d{\partial}
\def\g{\nabla}
\def\p{\prod_{\nu\in S}}
\def\tp{\prod_{\nu\in\tilde{S}}}
\def\u{\mathcal{U}}
\def\U{\mathbf{U}}
\def\x{\mathbf{x}}
\def\z{\mathbf{z}}
\def\fbf{\mathbf{f}}
\def\bff{\mathbf{F}}
\def\diag{{\rm diag}}
\def\cf{\check{f}}
\def\B{\rm{B}}
\def\GL{\rm{GL}}
\def\rank{\rm{rank}}
\def\q{\mathbf{q}}
\def\para{\|}
\def\h{\hspace{1mm}}
\def\tq{\tilde{\q}}
\def\nuball{B(x_{\nu},\frac{1}{4R\|q\g f_{\nu}(x_{\nu})\|_{\nu}})}
\def\ball{\prod_{\nu\in S}\nuball}
\def\ts{\tilde{S}}
\def\tp{\prod_{\nu\in\ts}}
\title{\sc Simultaneous Diophantine Approximation in Non-degenerate $p$-adic Manifolds.}
\author{A.~Mohammadi
\and
A.~Salehi Golsefidy}
\date{}
\begin{document}
\maketitle
\begin{abstract}

\noindent
$S$-arithmetic Khintchine-type theorem for products of non-degenerate\\ analytic $p$-adic manifolds is proved for the convergence case. In the $p$-\\adic case the divergence part is also obtained. \footnote{ {\em Key words and phrases.}
Diophantine approximation, Khintchine, $S$-arithmetic. 

{\em 2000 Mathematics Subject Classification} 11J83, 11K60}
\end{abstract}

\section{Introduction}


\textbf{Metric Diophantine approximation}
The metric theory of Diophantine approximation studies the interplay between the precision of approximation of real or $p$-adic numbers by rationals and the measure of the approximated set within certain prescribed precision. The ``finer" the precision is the ``smaller" the approximated set is. The theory was initiated by A.~Khintchine, who in \cite{Kh24} proved an ``almost every"  vs ``almost no" dichotomy for $\bbr.$ Let us state Khintchine's result. Given a decreasing function $\psi:\bbr^+\rightarrow\bbr^+$ we define the notion of a {\it $\psi$-approximable} number as follows; A real number $\xi$ is called {\it $\psi$-approximable} if for infinitely many integers $p$ and $q$, 
one has $|q\xi-p|<\psi(|q|)$. It is called {\it very well approximable} (VWA) if it is $\psi_{\vare}$-A where $\psi_{\vare}=1/q^{1+\vare}$ for some positive $\vare$. A.~Khintchine showed Lebesgue almost every (resp. almost no) real number is $\psi$-A if $\sum_{q=1}^{\infty}\psi(q)$ diverges 
(resp. converges). 
We refer to ~\cite[Chapter IV, Section 5]{Sch}, ~\cite[Chap. 1]{BD} and~\cite[Chap. 5]{Ca} for an account on this and further historical remarks.

The metric theory of Diophantine approximation on manifolds was considered as early as 1932 when K.~Mahler~\cite{Ma} conjectured that almost no point of the Veronese curve, $(x,x^2,\cdots,x^n),$ is VWA. This conjecture drew considerable amount of attention and was finally settled affirmatively by V.~G.~Sprind\v{z}uk see in~\cite{sp1,sp2}. Sprind\v{z}uk's idea (the so called essential and non-essential domains) has been applied by many people to attack many problems stated by him in both the real and the $p$-adic setting (the definition of $\psi$-A in the $p$-adic setting is given bellow). One should mention several works conducted on this issue by V.~Beresnevich, V.~Bernik, M.~M.~Dodson, E.~Kovalevskaya and others. See for example~\cite{Ber2, Ber3, BBK, BK1, Ko2}.

In 1985 S.~G.~Dani observed a nice relationship between flows on homogenous spaces and Diophantine approximation. This point of view was taken on and pushed much further in later works of D.~Kleinbock and G.~A.~Margulis. In~\cite{KM} Kleinbock and Margulis introduced a beautiful dynamical approach to the metric theory of Diophantine approximation and settled a multiplicative version of Sprind\v{z}uk 's conjecture, we refer to their paper for the formulation and further comments. The ``almost every" vs ``almost no" dichotomy was also completed within a few years from then, see in~\cite{BKM, BBKM}. It is worth mentioning that philosophically speaking the dynamical approach in~\cite{KM} and the idea of essential and non-essential domains of  Sprind\v{z}uk are both based on a delicate covering argument.   



One expects, from the nature of the dynamical approach, that this approach would work just as well in the $S$-arithmetic setting. This was started by D.~Kleinbock and G.~Tomanov~\cite{KT}. They defined the notion of VWA and showed an analogue of the Sprind\v{z}uk's conjecture in the $S$-arithmetic setting. This general philosophy was taken on in~\cite{MS} where we proved a Khintchine type theorem in $S$-arithmetic setting. In that paper the assumption was that the finite set of places, $S$, contains the infinite place. And we postponed the completion of the picture to this paper.


Let us fix some notations and conventions which are needed in order to state the main results of this paper, these notations will be used throughout the paper. We will not define the technical terms in here but rather refer the reader to the corresponding section for the precise definitions and remarks. 

We let $S$ be a finite set of places of $\bbq$ whose cardinality will be denoted by $\kappa$ throughout. We will always assume $S$ does not contain the infinite place and let $\ts=S\cup\{\infty\}.$ Define $\bbq_S=\p \bbq_{\nu}$ and correspondingly $\bbq_{\ts}=\tp \bbq_{\nu}.$

\vspace{1mm}
\noindent
We let $\Psi:\mathbb{Z}^{n+1}\setminus\{0\}\rightarrow\bbr^+$ be a map. A vector $\xi\in\bbq_S$ is said to be $\Psi$-A if 
$$\mbox{For infinitely many}\h\h\tilde{{\bf q}}=(\q,q_0)\in\bbz^n\times\bbz\h\h\mbox{one has}\h\h|\q\cdot\xi+q_0|_S^{\kappa}\le\Psi(\tilde{\q})$$ 
As one sees readily there are two possible ways of defining $\Psi$-A vectors in $\bbq_S.$ However the above mentioned definition is what will be used in this paper. We refer to the references provided before for the ``dual" definition and related remarks. Let us just note that the notion of a VWA number which corresponds to $\Psi_{\vare}(\tilde{{\bf q}})=\| \tilde{{\bf q}}\|_S^{-(1+n)(1+\vare)}$ is unchanged if one uses either definition.

For all $\nu\in S$ we fix once and for all an open bounded ball $U_{\nu}$ in $\bbq_{\nu}^{d_{\nu}}.$ Let
$$f_{\nu}=(f_{\nu}^{(1)},\cdots,f_{\nu}^{(n)}): U_{\nu} \rightarrow\bbq_{\nu}^n$$
be analytic non-degenerate map i.e $f_{\nu}$'s are analytic and the restrictions of $1,\hspace{.5mm} f_{\nu}^{(1)},\cdots,f_{\nu}^{(n)}$ to any open ball of $U_{\nu}$ are linearly independent over $\mathbb{Q}_{\nu}$.

Define $\mathbf{U}=\p U_{\nu}$ and let
$\mathbf{f}(\x)=(f_{\nu}(x_{\nu}))_{\nu\in S}$, \vspace{.75mm}where $\x=(x_{\nu})_{\nu\in S}\in
\mathbf{U}$. Since $\U$ is compact we may replace $\mathbf{f}$ by
$\mathbf{f}/M$, for a suitable $\tilde S$-integer $M$, and assume
that $\para f_{\nu}(x_{\nu})\para_{\nu} \le 1,$ $\para \g
f_{\nu}(x_{\nu})\para_{\nu}\le 1$, and $L\leq1$ where
$$L=\sup
\bigcup_{|\beta|=2,\nu\in
S}\{2|\Phi_{\beta}f_{\nu}(x)|_{\nu}\hspace{1mm}|\hspace{1mm} x\in
U_{\nu}\times U_{\nu}\times U_{\nu}\},$$
for any $\nu\in S.$ The functions $\Phi_{\beta} f_{\nu}$ in here are certain two fold difference quotients of $f_{\nu},$ see section~\ref{secprelim} for the exact definition.  

We can now state the theorems.


\begin{thm}~\label{khintchine}
Let $\U$ and $\mathbf{f}$ be as above. Further assume that 
$\Psi:\mathbb{Z}^{n+1}\setminus \{0\}\rightarrow\bbr_+,$ is a function which satisfies
\begin{itemize}
\item[(i)]$\Psi$ is norm-decreasing, i.e. $\Psi(\q)\ge\Psi(\q')$ for $|\q|_{\infty}\leq|\q'|_{\infty}$,
\item[(ii)]$\sum_{\mathbf{q}\in\bbz^{n+1}\setminus\{0\}}\Psi(\mathbf{q})<\infty$.
\end{itemize}
then the set
$$\mathcal{W}_{\mathbf{f},\Psi}=\{\x\in \mathbf{U} |\hspace{1mm}\mathbf{f}(\x)\hspace{1mm} \mbox{\rm{is}}\hspace{1mm} \Psi-A\}$$
has measure zero. 
\end{thm}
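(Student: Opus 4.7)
The plan is to run the standard Borel-Cantelli plus quantitative non-divergence argument, adapted to the purely finite-place $S$-arithmetic setting of this paper. For each $\tq = (\q,q_0) \in \bbz^{n+1} \setminus\{0\}$ set
$$A_{\tq} = \{\x \in \U \,:\, |\q\cdot\fbf(\x)+q_0|_S^{\kappa} \le \Psi(\tq)\},$$
so that $\mathcal{W}_{\fbf,\Psi} = \limsup_{\tq} A_{\tq}$ and the convergence Borel-Cantelli lemma reduces the theorem to proving $\sum_{\tq} \mu(A_{\tq}) < \infty$. I would decompose $\bbz^{n+1}\setminus\{0\}$ into dyadic shells $\mathcal{E}_j = \{\tq : 2^{j-1} < |\tq|_{\infty} \le 2^j\}$; by monotonicity~(i), $\Psi$ is comparable on $\mathcal{E}_j$ to a single value $\Psi_j$, and the summability~(ii) is equivalent to $\sum_j 2^{j(n+1)} \Psi_j < \infty$, so it suffices to prove a per-shell estimate of the form $\sum_{\tq\in\mathcal{E}_j} \mu(A_{\tq}) \lesssim 2^{j(n+1)} \Psi_j$.

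To obtain this I would first localize by covering $\U$ with a finite family of balls small enough that the Taylor expansion of each $f_{\nu}$ linearizes, a smallness controlled by the hypothesis $L\le 1$ on the second difference quotients $\Phi_{\beta}f_{\nu}$. On one such ball $B$ and inside one shell $\mathcal{E}_j$, the conditions $|\q\cdot\fbf(\x)+q_0|_S^{\kappa}\le\Psi_j$ and $|\tq|_{\infty}\le 2^j$ translate, via a Dani-type correspondence, into the statement that the $\ts$-arithmetic lattice $g_{\mathbf{t}}\,u_{\fbf(\x)}\bbz^{n+1}$ contains a specific short nonzero vector coming from $\tq$, where $g_{\mathbf{t}}$ is a diagonal element expanding at the places of $S$ so as to absorb $\Psi_j$ and contracting at $\infty$ so as to absorb $2^j$. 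Non-degeneracy of each $f_{\nu}$ then yields the $(C,\alpha)$-good property for the relevant coordinate functions (in the sense of Kleinbock-Margulis, $p$-adically extended in~\cite{KT} and used in~\cite{MS}), and the ensuing quantitative non-divergence on the space of $\ts$-arithmetic lattices produces a bound
$$\mu\bigl(\{\x\in B : g_{\mathbf{t}}\,u_{\fbf(\x)}\bbz^{n+1}\text{ has a nonzero vector of }\ts\text{-norm}\le\eta\}\bigr) \le C\,\eta^{\alpha}\,\mu(B).$$
Choosing $\eta$ appropriately removes a small ``bad'' subset of $B$ on whose complement a direct Fubini / mean-value estimate for each individual $\tq$ yields $\mu(A_{\tq}) \lesssim \Psi_j$, and summation over $\tq\in\mathcal{E}_j$ and then over $j$ closes the argument.

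The main obstacle is the quantitative non-divergence step in the asymmetric case $\infty\notin S$. In the earlier paper~\cite{MS} the infinite place sat inside $S$ and carried both a manifold factor and part of the approximation condition; here the approximation lives purely at the finite places while the integer constraint on $\tq$ still sits at $\infty$, which forces a careful choice of $g_{\mathbf{t}}$ and a verification of the $(C,\alpha)$-good property simultaneously at the Archimedean place (where no manifold $f_{\infty}$ is given and one is reduced to the linear flow on $\bbz^{n+1}$) and at each $\nu\in S$. In parallel one has to prune the ``non-essential'' region where some $\para\q\cdot\g f_{\nu}(x_{\nu})\para_{\nu}$ is abnormally small before clean non-divergence can be applied; handling this via a Sprind\v{z}uk-style covering that again exploits the analytic non-degeneracy of each $f_{\nu}$ is the most delicate piece of the argument.
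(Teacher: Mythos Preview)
Your plan is essentially the paper's: localize to a small ball, group $\tq$ into dyadic shells, split each shell according to whether $\|\q\cdot\nabla f_\nu(x_\nu)\|_\nu$ is large or small, use a direct mean-value/Fubini estimate (the paper's Theorem~\ref{>}) on the large-gradient piece to get a bound summable by~(ii), use quantitative non-divergence (the paper's Theorem~\ref{<}) on the small-gradient piece to get a geometrically decaying bound, and then apply Borel--Cantelli to each piece separately.

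Two points deserve sharpening. First, your final paragraph inverts the roles of the tools: in the paper it is precisely the non-divergence theorem that disposes of the small-gradient (``non-essential'') region, not a separate Sprind\v{z}uk-style covering applied beforehand; the covering/mean-value argument is what handles the \emph{large}-gradient region. Second, and this is the technical heart you are glossing over: the lattice on which non-divergence is run is not $g_{\mathbf{t}}u_{\fbf(\x)}\bbz^{n+1}$ in $(n{+}1)$-space but rather $\mathbf{D}\,\u_{\x}\Lambda$ in the augmented space $\prod_{\nu\in\ts}\bbq_\nu^{n+d_\nu+1}$, where $\u_{\x}$ carries both $f_\nu$ and $\nabla f_\nu$ as rows. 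Building the gradient into the lattice is exactly what makes the condition $\|\q\cdot\nabla f_\nu\|_\nu<K_\nu=|\tq|_\infty^{-\epsilon}$ part of the ``short vector'' event, and that extra factor $K_\nu$ is what produces the $2^{-\epsilon t}$ in $\vare$ needed for $\sum_t|\bar A_{<t}|$ to converge; with the na\"{\i}ve lattice the non-divergence bound yields only $(\Psi_t\,2^{t(n+1)})^{\alpha/(n+1)}$, which hypothesis~(ii) does not force to be summable. (Relatedly, the paper applies Borel--Cantelli to the shell unions $\bar A_{\ge t}$ and $\bar A_{<t}$, not to the individual $A_{\tq}$; the stronger assertion $\sum_{\tq}\mu(A_{\tq})<\infty$ that you state as your reduction is neither proved nor needed.)
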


\noindent
Our result in the divergent part is somewhat more restrictive. It is only the $p$-adic case that is proved here i.e. $S$ consists of only one valuation. Note also the function $\Psi$ in theorem~\ref{divergent} is more specific.

\begin{thm}~\label{divergent}
Let $\U$ be an open subset of $\bbq_{\nu}^{d}$ and let $\fbf:\U\rightarrow\bbq_{\nu}^{n}$ be a non-degenerate analytic map (in the above sense). Let $\psi:\bbz\rightarrow\bbr_+$ be a non-increasing function for which $\sum_{k}\psi(k)$ diverges. Define the function $\Psi$ by $$\Psi(\tq)=\|\tq\|_{\infty}^{-n}\psi(\|\tq\|_{\infty})\hspace{3mm}\mbox{for any}\hspace{3mm}\tq\in\bbz^{n+1}\setminus\{0\}$$
Then the set
$$\mathcal{W}_{\mathbf{f},\Psi}=\{\x\in \mathbf{U} |\hspace{1mm}\mathbf{f}(\x)\hspace{1mm} \mbox{\rm{is}}\hspace{1mm} \Psi-A\}$$
has full measure.
\end{thm}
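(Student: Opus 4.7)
The strategy is to establish the divergent Borel-Cantelli criterion for the natural family of ``resonant sets'' on the manifold, in the spirit of the real-case proof of Beresnevich-Bernik-Kleinbock-Margulis \cite{BBKM} adapted to $\bbq_\nu$. First reduce to a local statement: by a standard zero-one argument it suffices to prove that for every sufficiently small ball $B\subseteq\U$ the set $\mathcal{W}_{\fbf,\Psi}\cap B$ has positive (hence full) measure. For each $\tq=(\q,q_0)\in\bbz^{n+1}\setminus\{0\}$ put
$$A_\tq \;:=\; \{x \in B : |\q\cdot \fbf(x) + q_0|_\nu \leq \Psi(\tq)\},$$
so that $\mathcal{W}_{\fbf,\Psi}\cap B \supseteq \limsup_\tq A_\tq$. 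The task is to verify the two hypotheses of the quasi-independent Borel-Cantelli lemma: (a) $\sum_\tq \mu(A_\tq) = \infty$, and (b) the upper bound $\mu(A_\tq \cap A_{\tq'}) \ll \mu(A_\tq)\,\mu(A_{\tq'})$ off a manageable diagonal.

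For (a), fix $\q\in\bbz^n$ with $\|\q\|_\infty \sim Q$ and consider the linear form $g_\q(x) := \q\cdot\fbf(x)$. Non-degeneracy of $\fbf$, i.e.\ linear independence of $1, f_\nu^{(1)},\dots,f_\nu^{(n)}$ on every open subball, forces some partial derivative of $g_\q$ to have $\nu$-norm comparable to that of the largest entry of $\q$ on a definite proportion of $B$. A $p$-adic Taylor expansion together with a covering argument of the sort used in \cite{KM,KT,MS} then produces a lower bound for $\mu(A_\tq)$. Summing first over the $\sim Q$ admissible values of $q_0$---whose target balls are automatically disjoint by the ultrametric inequality---and then over the $\sim Q^n$ integer vectors $\q$ with $\|\q\|_\infty\sim Q$ yields a contribution to $\sum_\tq\mu(A_\tq)$ comparable to $\psi(Q)$, so divergence of $\sum_k\psi(k)$ gives (a).

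The principal obstacle lies in (b). One splits the pairs $(\tq,\tq')$ according to whether $\q$ and $\q'$ are $\bbq$-linearly dependent. In the ``transverse'' regime, the two-dimensional auxiliary map $x \mapsto (g_\q(x),g_{\q'}(x))$ is sufficiently non-singular, again by non-degeneracy, that $\mu(A_\tq\cap A_{\tq'})$ factorizes up to an absolute constant; establishing this factorization is the technical heart of the argument and where the non-degeneracy hypothesis is most essential. ``Parallel'' pairs (those with $\q' = c\q$ for rational $c$) contribute a bounded over-count per $\q$ and can be absorbed into the diagonal term of Chung-Erd\H{o}s. The ultrametric property of $|\cdot|_\nu$ actually streamlines several steps compared to the Archimedean case---nested balls are either disjoint or nested---although real care is still needed to handle the interaction between $\nu$-sizes of derivatives and of integer coefficients, particularly when $\|\q\|_\nu$ is much smaller than $\|\q\|_\infty$. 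Once (a) and (b) are in hand, the Chung-Erd\H{o}s divergent Borel-Cantelli lemma yields $\mu(\limsup_\tq A_\tq) > 0$, and the initial reduction promotes this to full measure in $\U$.
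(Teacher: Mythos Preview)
Your strategy---a direct Chung--Erd\H{o}s quasi-independence argument on the sets $A_\tq$---is not the route the paper takes. The paper passes instead through the machinery of \emph{regular systems}: it shows (Theorem~\ref{regular}) that the resonant sets $R_{\q,q_0}=\{x:\q\cdot\fbf(x)+q_0=0\}$, with height $N(R_{\q,q_0})=\|\tq\|_\infty^{n+1}$, form a regular system of codimension one near almost every point, and then invokes a general black-box (Theorem~\ref{approximation}, lifted from \cite{BBKM}) which converts regularity plus divergence of the appropriate series into a full-measure limsup set. Regularity is verified using the \emph{convergence}-case estimates of Theorems~\ref{>} and~\ref{<} (packaged as Theorem~\ref{upperbound}) to control the complement of a thin neighbourhood of the resonant sets, together with Dirichlet's principle and a $p$-adic Newton iteration to locate actual zeros of $\q\cdot\fbf+q_0$ near generic points. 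A final mean-value estimate transfers the distance condition $\mathrm{dist}(\x,R_{\q,q_0})<\widetilde\Psi$ back to $|\tq\cdot\fbf(\x)|_\nu<\Psi(\tq)$.

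Your proposal has a genuine gap at step~(b). You flag the pairwise bound $\mu(A_\tq\cap A_{\tq'})\ll\mu(A_\tq)\,\mu(A_{\tq'})$ as ``the technical heart'' but offer no mechanism for it, and on a non-degenerate manifold it does not follow from any soft argument: for linearly independent $\q,\q'$ the map $x\mapsto(g_\q(x),g_{\q'}(x))$ need not be a submersion anywhere on $B$ (already impossible when $d=1$, and delicate when $\q,\q'$ are $\bbq$-independent but $\nu$-adically close), so no implicit-function factorization is available. The regular-systems framework exists precisely to circumvent a direct verification of pairwise correlations; the quasi-independence is encoded in the disjoint-balls axiom of Definition~\ref{rsystem}, and the second-moment computation is done once inside Theorem~\ref{approximation}. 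If you attempted to fill in~(b) honestly you would end up re-deriving those axioms, which is exactly what the paper does. (Your sketch of~(a) also needs care: the claim that some partial of $g_\q$ has $\nu$-norm comparable to the largest entry of $\q$ is false as stated when $\|\q\|_\nu\ll\|\q\|_\infty$, though this works in your favour for a lower bound on $\mu(A_\tq)$.)
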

   
\vspace{1cm} 
\begin{remark}~\label{mainremark} \item[1.] Although our result in the divergent case is more restrictive than that of the convergence case it actually is the formulation which has been historically considered. For example Mahler's conjecture for the Veronese curve is formulated in the setting as in theorem~\ref{divergent}. However it is interesting to prove a multiplicative version of the divergence part, this is not known in the real case either.    
\item[2-] Here we just look at simultaneous approximation in non-Archimedean places. As was mentioned above in~\cite{MS}, we proved a convergence Khintchine-type theorem for $\bbq_S$-manifolds, where $S$ contains the infinite place. Indeed, in that case, we defined the notion of $(\Psi, \mathcal{R})$-A for $\mathcal{R}$ a finitely generated subring of $\bbq$ and $\Psi$ a function from $\mathcal{R}^n$ to $\bbr^+$, and prove a convergence $\mathcal{R}$-Khintchine-type theorem. Such definition is not available in the setting here as the interplay between the infinity norm and the finite places is more subtle.
 \item[3-] Most of our proof in the convergence part works for a function $\Psi$ which is only decreasing in the $S$-norm of the coordinates, and not necessarily in the norm of the vector. In fact, in~\cite{MS}, we only assume this weaker assumption. It is interesting to see if the norm-decreasing condition can be relaxed.  
\end{remark}
 
The proof of the convergence part is very similar to the proof in~\cite{MS}. In particular the main technical difficulties that arise in carrying out the strategies developed in~\cite{BKM} to the $S$-arithmetic setting  have the same nature in these two papers. As similar as these two papers are there are many differences in details in both the ``calculus lemma" and ``dynamics part". This and the fact that the divergence case is also treated here demanded a coherent separate paper in the $p$-adic setting. 
 

Theorem~\ref{khintchine} will be proved with the aid of the following two theorems and applying Borel-Cantelli lemma. The following is what we refered to as the calculus lemma.
 
\begin{thm}~\label{>}
Let $\mathbf{U}$, $\mathbf{f}$ be as in theorem~\ref{khintchine}, $0<\epsilon<\frac{1}{2\kappa}$, for any ball $\mathbf{B}\subset\U$ let $$\mathcal{A}=\left\{\x\in\mathbf{B}|\h\exists\hspace{.5mm}\tilde{\q}\in\bbz^{n+1}\setminus\{0\},\h|\tilde{\q}|_{\infty}<T,\h\begin{array}{l}|\tilde{\q}\cdot\mathbf{f}(\x)|_{S}^{\kappa}\h<\delta T^{-n-1} \\  \|\q\g\mathbf{f}(\x)\|_{\nu}>|\tq|_{\infty}^{-\epsilon}\h{\mbox{for all}}\h\nu\in S\end{array}\right\}.$$ Then we have $|\mathcal{A}|<C \delta\hspace{1mm}|\mathbf{B}|,$ for some universal constant $C.$ 
\end{thm}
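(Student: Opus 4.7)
The plan is to prove an $S$-arithmetic (purely non-Archimedean) analogue of the Kleinbock--Margulis calculus lemma from~\cite{KM}, along the lines already carried out in~\cite{MS} when $\infty\in S$. Decompose $\mathcal{A}=\bigcup_{\tilde{\q}}\mathcal{A}_{\tilde{\q}}$ over the at most $(2T+1)^{n+1}$ integer vectors $\tilde{\q}=(\q,q_0)$ with $|\tilde{\q}|_\infty<T$. Since summation over $\tilde{\q}$ will then yield the claimed bound, it suffices to establish $|\mathcal{A}_{\tilde{\q}}|\lesssim\delta T^{-n-1}|\mathbf{B}|$ with an implicit constant independent of $\tilde{\q}$, $T$, and $\mathbf{B}$.

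For fixed $\tilde{\q}$ and each $\x\in\mathcal{A}_{\tilde{\q}}$ attach the product ball $\mathbf{B}(\x)=\prod_{\nu\in S}B(x_\nu,r_\nu(x_\nu))$ with $r_\nu(x_\nu)=\frac{1}{4R\|\q\nabla f_\nu(x_\nu)\|_\nu}$, where $R$ is a constant chosen to force the Taylor linearization to be valid. Since each $\bbq_\nu$ is ultrametric, two product balls are either disjoint or nested, so a standard selection produces a disjoint subcover $\{\mathbf{B}_i\}$ of $\mathcal{A}_{\tilde{\q}}$. On $\mathbf{B}_i$ centered at $\x_0$, the bound $L\le 1$ on the two-fold difference quotients, combined with the non-Archimedean observation $|q_j|_\nu\le 1$ for $\q\in\bbz^n$ (all $\nu$ being finite), gives
\[\bigl|\q\cdot f_\nu(y_\nu)-\q\cdot f_\nu(x_{0,\nu})-\q\nabla f_\nu(x_{0,\nu})(y_\nu-x_{0,\nu})\bigr|_\nu\le\tfrac{L}{2}|y_\nu-x_{0,\nu}|_\nu^2.\]
For an appropriate $R$ this remainder is strictly smaller in $\nu$-norm than the linear part on $B(x_{0,\nu},r_\nu)$; ultrametricity then forces $\|\q\nabla f_\nu\|_\nu$ to be constant on the ball and $\q\cdot f_\nu+q_0$ to be essentially affine there with gradient of $\nu$-norm equal to $\|\q\nabla f_\nu(x_{0,\nu})\|_\nu$.

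For each $\nu$ select a coordinate $j_\nu$ realising (up to $d_\nu$) the maximum of $|\partial_j(\q\cdot f_\nu)|_\nu$ and substitute $y_\nu:=\q\cdot f_\nu(x_\nu)+q_0$ for $x_\nu^{(j_\nu)}$; the Jacobian in place $\nu$ is $\asymp 1/\|\q\nabla f_\nu(x_{0,\nu})\|_\nu$ and the image of $\mathbf{B}_i$ has each $y_\nu$ in a ball of $\nu$-radius $\asymp 1/R$. The exponent $\kappa$ in $|\tilde{\q}\cdot\mathbf{f}(\x)|_S^\kappa<\delta T^{-n-1}$ matches $|S|=\kappa$ precisely, so the constraint pulls back in the $y$-coordinates to a region of Haar measure $\lesssim\delta T^{-n-1}$ without logarithmic loss (each place contributes a single-variable bound $(\delta T^{-n-1})^{1/\kappa}$, whose $\kappa$-fold product is $\delta T^{-n-1}$). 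Collecting the Jacobians and comparing with $|\mathbf{B}_i|=\prod_\nu r_\nu^{d_\nu}$ gives
\[\frac{|\mathcal{A}_{\tilde{\q}}\cap\mathbf{B}_i|}{|\mathbf{B}_i|}\lesssim\delta T^{-n-1}\prod_{\nu\in S}\frac{1}{r_\nu\|\q\nabla f_\nu\|_\nu}=(4R)^\kappa\,\delta T^{-n-1},\]
so that summing over the disjoint family produces $|\mathcal{A}_{\tilde{\q}}|\lesssim (4R)^\kappa\delta T^{-n-1}|\mathbf{B}|$.

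The main obstacle, and where the hypothesis $\epsilon<\tfrac{1}{2\kappa}$ is crucial, is the control of the effective constant $R$: the Taylor comparison $\tfrac{L}{2}r_\nu^2<1/(4R)$ forces $R\gtrsim L/\|\q\nabla f_\nu\|_\nu^2$, which via $\|\q\nabla f_\nu\|_\nu>|\tq|_\infty^{-\epsilon}$ can be as bad as $|\tq|_\infty^{2\epsilon}$. I expect to handle this by a secondary dyadic decomposition over the size of $\|\q\nabla f_\nu(x_\nu)\|_\nu$: at each dyadic level one chooses $R$ accordingly, and the $(4R)^\kappa$ factor produced across the $\kappa$ places is then absorbed when summed against the restricted lattice-point count of $\tilde{\q}$'s compatible with that level, the inequality $\epsilon<1/(2\kappa)$ being exactly the threshold making the resulting geometric sum converge to a universal constant. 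The accompanying verification that the product-ball covering works simultaneously across all places with bounded overlap, along with careful tracking of the constants through the linearization and change of variables, constitutes the remaining technical content of the argument.
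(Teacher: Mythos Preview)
Your argument has a genuine gap in the counting. By decomposing over the full vector $\tilde{\mathbf{q}}=(\mathbf{q},q_0)$ you are summing over $\sim T^{n+1}$ terms and aiming for $|\mathcal{A}_{\tilde{\mathbf{q}}}|\lesssim \delta T^{-n-1}|\mathbf{B}|$; but your own estimate already carries the factor $(4R)^\kappa$, and the Taylor comparison forces $R\gtrsim |\tilde{\mathbf{q}}|_\infty^{2\epsilon}$. Even with the optimal choice of $R$ at each dyadic scale of $|\tilde{\mathbf{q}}|_\infty$, the total is at least of order $\delta\, T^{2\epsilon\kappa}|\mathbf{B}|$, which diverges with $T$: the hypothesis $\epsilon<\tfrac{1}{2\kappa}$ only makes this exponent strictly less than $1$, not $\le 0$. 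The alternative reading of your dyadic proposal---stratifying by the size of $\|\mathbf{q}\nabla f_\nu(x_\nu)\|_\nu$---cannot restrict the lattice count of admissible $\tilde{\mathbf{q}}$, since that quantity depends on $x_\nu$ and is not a property of $\tilde{\mathbf{q}}$ alone.

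The paper avoids this loss by fixing only $\mathbf{q}\in\mathbb{Z}^n$ (hence $\sim T^n$ terms), taking $R=T^{1/\kappa}$ once and for all, and invoking the \emph{product formula} to show that on each product ball $\mathbf{B}(\mathbf{x})$ there is at most one integer $q_0$ with $|q_0|_\infty<T$ for which $|\mathbf{q}\cdot f_\nu+q_0|_\nu<\tfrac{1}{4R}$ can hold at every $\nu\in S$: two such $q_0$'s would give $\prod_{\nu\in S}|q_0^{(1)}-q_0^{(2)}|_\nu<(4R)^{-\kappa}\asymp T^{-1}$ while $|q_0^{(1)}-q_0^{(2)}|_\infty<2T$, contradicting $|m|_\infty\prod_{\nu\in S}|m|_\nu\ge 1$ for nonzero $m\in\mathbb{Z}$. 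With $q_0$ thus eliminated, the per-$\mathbf{q}$ bound becomes $|\mathcal{A}_{\mathbf{q}}|\lesssim (4R)^\kappa\delta T^{-n-1}|\mathbf{B}|\asymp\delta T^{-n}|\mathbf{B}|$, and summing over $\mathbf{q}$ gives $C\delta|\mathbf{B}|$. The condition $\epsilon<\tfrac{1}{2\kappa}$ is used not in any dyadic summation but simply to ensure that the single choice $R=T^{1/\kappa}$ already dominates $|\tilde{\mathbf{q}}|_\infty^{2\epsilon}$, so the linearization is valid on every ball without a secondary decomposition. This arithmetic uniqueness of $q_0$ is the missing idea in your outline.
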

The remaining set will be controlled using the following theorem. The proof of this theorm has dynamical nature. 

\begin{thm}~\label{<}
Let $\U$ and $\mathbf{f}$ be as in theorem~\ref{khintchine}. For any
$\x=(x_{\nu})_{\nu\in S}\in \U$, one can find a neighborhood
$\mathbf{V}=\p V_{\nu}\subseteq \U$ of $\x$ and $\alpha>0$ with
the following property: If $\mathbf{B}\subseteq \mathbf{V}$ is any ball then
there exists $E>0$ such that for any choice of $T_0,\cdots,T_n\ge 1$, $K_{\nu}>0$ and $0<\delta^{\kappa}\le\min\frac{1}{T_i}$ where $\delta^{\kappa} T_0\cdots
T_n\p K_{\nu}\le 1$ one has
$$\left|\left\{\x\in\mathbf{B}|\hspace{1mm}\exists
\tilde{\q}\in\bbz^{n+1}\setminus\{0\}:\begin{array}{l}|\tilde{\q}\cdot\mathbf{f}(\x)|_S<\delta\\
\|\g f_{\nu}(x)q\|_{\nu}<K_{\nu}\\|q_i|_{\infty}<T_i\end{array}\right\}\right|\le
E\hspace{.5mm}\vare^{\alpha}|\mathbf{B}|,\hspace{5mm}~(\ref{<})$$ where
$\vare=\max\{\delta,(\delta^{\kappa} T_0\cdots
T_n\p K_{\nu})^{\frac{1}{n+1}})\}.$
\end{thm}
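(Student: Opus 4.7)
The plan is to recast the three simultaneous Diophantine conditions defining the set in~(\ref{<}) as the assertion that a certain $S$-arithmetic lattice in $\bbq_{\ts}^{n+1}$, depending on $\x$, admits a nonzero vector of small norm, and then to invoke an $S$-arithmetic quantitative non-divergence estimate in the spirit of Kleinbock--Tomanov~\cite{KT} and our earlier work~\cite{MS}.

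First, for each $\x=(x_\nu)_{\nu\in S}\in\U$, I would form a unipotent element $u(\x)\in\prod_{\nu\in\ts}\GL_{n+1}(\bbq_\nu)$ whose $\nu$-component (for $\nu\in S$) encodes $f_\nu(x_\nu)$ together with $\g f_\nu(x_\nu)$, and is the identity at $\infty$. A suitable diagonal element $g=(g_\nu)_{\nu\in\ts}$ whose entries are built from $\delta$, the $K_\nu$'s, and the $T_i$'s is then chosen so that, for $\tq\in\bbz^{n+1}\setminus\{0\}$, the vector $g\cdot u(\x)\tq$ has all its $\nu$-components of norm at most $\vare$ precisely when $\tq$ witnesses the three conditions in~(\ref{<}). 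The set on the left of~(\ref{<}) is then, up to a uniform constant, the set of $\x\in\mathbf{B}$ for which the lattice $g\cdot u(\x)\bbz^{n+1}\subset\bbq_\ts^{n+1}$ admits a nonzero vector of norm $\leq\vare$.

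To bound the measure of this set, I would invoke the $S$-arithmetic version of the Kleinbock--Margulis quantitative non-divergence theorem from~\cite{KT,MS}. This requires verifying, on a sufficiently small product-neighborhood $\mathbf{V}$ of the fixed base point, two standard hypotheses: (i) for every proper rational subspace $W\subset\bbq^{n+1}$, the function $\x\mapsto\|g\cdot u(\x)(W\otimes\bbq_\ts)\|$ (the norm of the associated Pl\"ucker vector) is $(C,\alpha)$-good in the $S$-arithmetic sense on $\mathbf{V}$, and (ii) this function is not uniformly small on $\mathbf{V}$. Hypothesis (i) follows from the analyticity of each $f_\nu$, which makes the relevant coordinate functions polynomial-like and hence $(C,\alpha)$-good on small balls.

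The main obstacle is the non-planarity condition (ii), which has to be checked place by place across $\prod_{\nu\in\ts}$. Here the non-degeneracy of each $f_\nu$ enters essentially: via a Taylor expansion whose quadratic remainder is controlled by $L\leq 1$, one argues that at each $\nu\in S$ the $\nu$-component of the Pl\"ucker function cannot stay uniformly small on $V_\nu$; the Archimedean place is handled using the integrality of $\tq$ together with the scalings $T_i^{-1}$. Once the local verifications are assembled and the resulting constants $C,\alpha$ are substituted into the abstract $S$-arithmetic non-divergence estimate, the bound $E\,\vare^\alpha|\mathbf{B}|$ drops out, with $E$ depending on $\mathbf{B}$ but independent of $\delta$, the $T_i$'s, and the $K_\nu$'s.
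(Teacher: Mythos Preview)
Your overall strategy --- recast the three conditions as a short-vector problem for a lattice and apply the $S$-arithmetic quantitative non-divergence of~\cite{KT,MS} --- is exactly the paper's route, but there is a concrete dimensional obstruction in your setup. An element of $\prod_{\nu\in\ts}\GL_{n+1}(\bbq_\nu)$ acting on $\tq\in\bbz^{n+1}$ cannot simultaneously encode the scalar $\tq\cdot f_\nu(x_\nu)$ \emph{and} the $d_\nu$-dimensional vector $\nabla f_\nu(x_\nu)q$: there is no room for the latter. The paper therefore works not in $\bbq_\nu^{n+1}$ but in $\bbq_\nu^{m_\nu}$ with $m_\nu=n+d_\nu+1$, embedding $\Lambda=\bbz_{\ts}\langle\e_0,\dots,\e_n\rangle$ as a rank-$(n{+}1)$ lattice inside the larger ambient space, so that $\u_\x$ (see Section~\ref{sec<}) sends $(p,0_{d_\nu},\vec q)$ to $(p+f_\nu\cdot\vec q,\ \nabla f_\nu\,\vec q,\ \vec q)$. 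This enlargement forces a second adjustment you have not anticipated: the ordinary Pl\"ucker norm on $\bigwedge\bbq_\nu^{m_\nu}$ is \emph{not} the right norm-like map, because wedge products involving two or more of the auxiliary directions $e_\nu^{*i}$ have no lattice-theoretic meaning. The paper instead uses $\theta(\cdot)=\tp\|\pi_\nu(\cdot)\|$, where $\pi_\nu$ is the quotient by the ideal $\mathcal{I}^*_\nu=\langle e_\nu^{*i}\wedge e_\nu^{*j}\rangle$; verifying that $\theta$ is norm-like and that $\theta$ restricted to vectors agrees with $c(\cdot)$ is what makes the reduction to Theorem~\ref{unipotent} go through.

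A second, smaller gap concerns your verification of the lower bound (your condition~(ii), the paper's~(III)). This is \emph{not} handled by a Taylor expansion with remainder controlled by $L\le 1$ --- that device belongs to the proof of Theorem~\ref{>}. For submodules of rank $\ge 2$ the paper instead expands $\pi_\nu(\u_\x^\nu\mathcal{Y}_\nu)$ in an orthonormal basis adapted to $(\dbf\Delta)_\nu$ and isolates the terms carrying the \emph{skew gradient} $\widetilde\nabla^*(\cf_\nu\cdot x_\nu^{(i)},\cf_\nu\cdot x_\nu^{(j)})$; the required uniform lower bound then comes from Theorem~\ref{tartibat} (the analytic non-degeneracy input), together with the inequality $\tp\max\{|a_\nu|,|b_\nu|\}\ge 1$ and the eigenvalue constraints on $\mathbf{D}$. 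Without the skew-gradient machinery of Section~\ref{secgood} (which is also where analyticity, as opposed to mere $C^k$, is genuinely used) the lower bound does not follow.
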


The main idea in the proof of theorem~\ref{divergent} is based on a method of regular systems. This method was first applied in \cite{Ber0} in dimension one and later in \cite{BBKM} it was generalized to any dimension. In our proof we will use the estimates obtained in theorems~\ref{>} and~\ref{<}, and get the following theorem, which provides us with a suitable {\it regular system of resonant sets}. Then a general result on regular systems (cf. Theorem~\ref{approximation}) will kick in and the proof will be concluded. See section~\ref{secregular} for the definitions.

\begin{thm}~\label{regular}
 Let $\fbf$ and $\U$ be as in theorem~\ref{regular}. Given $\tq=(\q,q_0)\in\bbz^n\times\bbz,$ we let 
 $${R}_{\q,q_0}=\{\x\in\U\h|\h\q\cdot\fbf(\x)+q_0=0\}.$$ 
 Define the following set 
 $$\mathcal{R}_{\fbf}=\{R_{\q,q_0}\h|\h (\q,q_0)\in\bbz^n\times\bbz\}$$ 
 and the function $N(R_{\q,q_0})=\|\tq\|_{\infty}^{n+1}.$ Then for almost every $\x_0\in\U$ there is a ball $\mathbf{B}_0\subset\U$ centered at $\x_0$ such that $(\mathcal{R},N,d-1)$ is a regular system in $\mathbf{B}_0.$
\end{thm}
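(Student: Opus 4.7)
The plan is to combine a $p$-adic Dirichlet argument with the measure estimates of Theorem~\ref{>} and Theorem~\ref{<}, and to use the $p$-adic implicit function theorem in order to convert near-zeros of $\q\cdot\fbf+q_0$ into genuine points of the resonant hypersurfaces $R_{\q,q_0}$. First I would fix an $\x_0\in\U$ at which the non-degeneracy hypothesis is quantitatively effective, and choose a ball $\mathbf{B}_0$ around $\x_0$ small enough that Theorem~\ref{<} applies with some $\alpha>0$, that Theorem~\ref{>} applies, and that on $\mathbf{B}_0$ the $p$-adic implicit function theorem yields the following uniform statement: whenever $\q\cdot\fbf(\x)+q_0=0$ with $\|\q\g\fbf(\x)\|_{\nu}\ge\eta$ for a fixed $\eta>0$, the set $R_{\q,q_0}$ is locally the graph of an analytic function over a $(d-1)$-dimensional ball, with Lipschitz constants independent of $\tq$.

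Fix a ball $\mathbf{B}\subset\mathbf{B}_0$ and a threshold $T$, and set $T_\ast=T^{1/(n+1)}$, so that $N(R_{\q,q_0})\le T$ is equivalent to $\|\tq\|_\infty\le T_\ast$. A pigeonhole argument on the lattice $\bbz^{n+1}$, applied at each $\x\in\mathbf{B}$ and using $\|\fbf(\x)\|_{\nu}\le 1$, produces a nonzero $\tq=(\q,q_0)$ with $\|\tq\|_\infty\le T_\ast$ and $|\q\cdot\fbf(\x)+q_0|_{\nu}\le c\,T_\ast^{-(n+1)}=c/T$. I would next remove from $\mathbf{B}$ two exceptional subsets on which the gradient of the approximating $\tq$ is too small. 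The first is the set where every admissible $\tq$ satisfies $\|\q\g\fbf(\x)\|_{\nu}<|\tq|_\infty^{-\epsilon}$; Theorem~\ref{>} with $\delta=c/T$ bounds its measure by $C(c/T)|\mathbf{B}|$, which is negligible as $T\to\infty$. The second is the set on which some $\tq$ has $|\q\cdot\fbf(\x)+q_0|_{\nu}\le c/T$ together with $\|\q\g\fbf(\x)\|_{\nu}<K_\nu$ for a small constant $K_\nu$; Theorem~\ref{<} with $\delta=c/T$, $T_i=T_\ast$ bounds its measure by $E\vare^{\alpha}|\mathbf{B}|$, which can be made arbitrarily small by shrinking $K_\nu$ and increasing $T$.

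Outside both exceptional sets, at every $\x$ there is an approximation with $|\q\cdot\fbf(\x)+q_0|_{\nu}\le c/T$ and $\|\q\g\fbf(\x)\|_{\nu}\ge\eta$. The $p$-adic implicit function theorem then produces a genuine zero $\x'\in R_{\q,q_0}$ within distance $\ll(\eta T)^{-1}$ of $\x$, and exhibits $R_{\q,q_0}$ as a $(d-1)$-dimensional analytic submanifold through $\x'$ with controlled geometry. Extracting a maximal $cT^{-1}$-separated subcollection of the chosen zeros, with separation measured along the normal direction to each resonant set, I obtain distinct resonant sets $R_1,\ldots,R_M$ with $N(R_i)\le T$ whose $cT^{-1}$-normal tubes cover the non-exceptional part of $\mathbf{B}$. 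A volume-packing comparison in $\bbq_{\nu}^{d}$ then forces $M\ge K|\mathbf{B}|T$, which is exactly the defining inequality for a regular system of dimension $d-1$ in $\mathbf{B}_0$.

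The main obstacle will be the simultaneous calibration of the parameters $\delta$, $T_i$, $K_\nu$ and $\epsilon$ in Theorems~\ref{>} and~\ref{<}, so that the two exceptional sets together occupy strictly less than, say, half of $|\mathbf{B}|$, while the lower bound $\eta$ required by the implicit function theorem remains compatible with the threshold $|\tq|_\infty^{-\epsilon}$. This will likely force passing to a ball $\mathbf{B}_0=\mathbf{B}_0(\x_0)$ on which an appropriate minor of $\g\fbf$ is bounded below in $\nu$-norm, and one must then verify that almost every $\x_0\in\U$ admits such a ball, which is precisely the point at which the non-degeneracy of $\fbf$ enters.
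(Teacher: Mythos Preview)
Your overall shape---Dirichlet, remove an exceptional set, implicit function theorem, packing---matches the paper's, but two steps do not work as written. Your use of Theorem~\ref{>} is reversed: that theorem bounds the measure of points $\x$ at which \emph{some} $\tq$ has both $|\tq\cdot\fbf(\x)|_S^\kappa<\delta T^{-n-1}$ and $\|\q\g\fbf(\x)\|_\nu>|\tq|_\infty^{-\epsilon}$; the set you describe, on which \emph{every} admissible $\tq$ has small gradient, lies in its complement and is not controlled by Theorem~\ref{>}. This first removal is in any case redundant, since your second one via Theorem~\ref{<} with a fixed $K_\nu=\eta$ already forces $\|\q\g\fbf(\x)\|_\nu\ge\eta$ on its complement. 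More seriously, you never produce the lower bound $\lambda(T)\le N(R_i)$ with $\lambda(T)\to\infty$ that Definition~\ref{rsystem} requires; nothing in your removal prevents the Dirichlet $\tq$ from having bounded height for all $T$. (A minor point: the packing count should be $t\ge K_1|\mathbf{B}|T^{d}$, coming from disjoint balls of diameter $T^{-1}$ centred at the zeros, not $M\ge K|\mathbf{B}|T$ from normal tubes.)

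The paper's device for the missing lower bound is the key idea you have not hit on. It removes instead the set $\mathcal{A}_\fbf(\delta;\tfrac{3}{4}\mathbf{B};Q)$ of points admitting an approximation of quality $\delta Q^{-n-1}$ by some $\tq$ with $\|\tq\|_\infty\le Q$; Theorem~\ref{upperbound} (itself a combination of Theorems~\ref{>} and~\ref{<}) bounds its measure by $C_0\delta|\mathbf{B}|$. On the complement $\mathcal{G}$, a refined Dirichlet at height $\delta^{-1}Q$ with the additional $p$-adic constraints $|q_i|_\nu<\delta$ for $i\ge 2$ produces a $\tq$ which---precisely because $\x\in\mathcal{G}$---must satisfy $\|\tq\|_\infty>Q$, yielding $N(R_{\q,q_0})\ge Q^{n+1}=\delta^{n+1}T$. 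The same membership is then reused for the gradient: after normalizing so that $f_1(\x)=\x_1$, if $|\partial_1(\q\cdot\fbf+q_0)|_\nu\le\delta/2$ held, every $|q_i|_\nu$ would be $<\delta$ and one could divide $\tq$ by a power of $p_\nu$ to obtain an integer vector of height $\le Q$ still lying in the forbidden approximation range, contradicting $\x\in\mathcal{G}$. Thus a single removal does double duty, delivering both the height lower bound and the gradient lower bound needed for Newton's method.
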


{\bf Structure of the paper.}
In section~\ref{secprelim} we recall some basic geometric and analytic facts about $p$-adic and $S$-arithmetic spaces. Theorem~\ref{>} is proved in section~\ref{sec>}. Section~\ref{secgood} is devoted to the notion of good functions. This section involves only statements of several technical ingredients needed later in the paper. Most of the proofs can be found in~\cite{MS}. Section~\ref{sec<} is devoted to the proof of theorem~\ref{<}. This is actually done modulo theorem~\ref{unipotent} which provides a translation of theorem~\ref{<} into a problem with dynamical nature. This dynamical problem is then solved in section~\ref{secunipotent} using an $S$-arithmetic version of a theorem in~\cite{KM} which was proved in~\cite{KT}. We recall the notion of regular systems in section~\ref{secregular} and prove theorem~\ref{regular} in this section. The proof of the main theorems will be completed in section~\ref{secmain}. The final section~\ref{secremarks} contains some concluding remarks and open problems.


{\bf Acknowledgments.}
Authors would like to thank G.~A.~Margulis for introducing this topic and suggesting this problem to them. We are also in debt of D.~Kleinbock for reading the first draft and useful discussions. 
\section{Notations and Preliminary}~\label{secprelim}


\textbf{\textit{Calculus of functions on local fields.}} The terminologies recalled here are from \cite{Sc}. Let
$F$ be a local field and let $f$ be an $F$-valued function defined
on an open subset $U$ of $F.$ Let $$\nabla^k U:=\{(x_1,\cdots,x_k)\in
U^k|\hspace{1mm}x_i\neq x_j
\hspace{1mm}\mbox{\rm{for}}\hspace{1mm} i\neq j\},$$ and define
the $k^{th}$ order difference quotient $\Phi^k f:
\nabla^{k+1}U\rightarrow F$ of $f$ inductively by $\Phi^0 f= f$
and $$\Phi^k
f(x_1,x_2,\cdots,x_{k+1}):=\frac{\Phi^{k-1}f(x_1,x_3,\cdots,x_{k+1})-\Phi^{k-1}f(x_2,x_3,\cdots,x_{k+1})}{x_1-x_2}.$$
As one sees readily $\Phi^k f$ is a symmetric function of its
$k+1$ variables. The function $f,$ is then called $C^k$ at $a\in U$ if the following
limit exits
$$\lim_{(x_1,\cdots,x_{k+1})\rightarrow(a,\cdots,a)} \Phi^k
f(x_1,\cdots,x_{k+1}),$$ and $f$ is called $C^k$ on $U$ if it is $C^k$
at every point $a\in U$. This is equivalent to $\Phi^k f$
being extendable to $\bar{\Phi}^k f:U^{k+1}\rightarrow F.$ This extension, 
if exists, is indeed unique. The $C^k$ functions are $k$ times
differentiable, and
$$f^{(k)}(x)=k!\bar{\Phi}^k(x,\cdots,x).$$ Note
that, $f\in C^k$ implies $f^{(k)}$ is continuous but the converse
fails. Also $C^{\infty}(U)$ is defined to be the class of
functions which are $C^k$ on $U$, for any $k$.  
Analytic functions are indeed $C^{\infty}.$

Let now $f$ be an $F$-valued function of several variables. Denote by
$\Phi_{i}^{k} f$ the $k^{th}$ order difference quotient of $f$
with respect to the $i^{th}$ coordinate. Then for any multi-index
$\beta=(i_1,\cdots,i_d)$ let
$$\Phi_{\beta}f:=\Phi_1^{i_1}\circ\cdots\circ\Phi_d^{i_d} f.$$
One defines the notion of $C^k$ functions correspondingly.

\textit{\textbf{$S$-Arithmetic spaces:}} 
Let $\nu$ be any place of $\bbq$ we denote by $\bbq_{\nu}$ the completion of $\bbq$ with respect to $\nu$ and let $\bbq_{S}=\p \bbq_{\nu}.$ If $\nu$ is a finite place we let $p_{\nu}$ be the uniformaizer and $\bbz_{\nu}$ the ring of $\nu$-integers. Given a $\bbq_{\nu}$-vector space $\mathcal{V}$ and a basis $\mathfrak{B}$ we let $\|\h\|_{\mathfrak{B}}$ denote the max norm with respect to this basis and we drop the index $\mathfrak{B}$ from the notation if there is no confusion. This naturally extends to a norm on $\bigwedge\mathcal{V}.$ If $\mathcal{R}$ is any ring and $x,y\in\mathcal{R}^n$ we let $x\cdot  y=\sum_{i=1}^nx^{(i)}y^{(i)}.$ 
The following is the definition of ``orthogonality" which will be useful in the sequel. 

\begin{definition}~\label{orthogonal}
Let $\nu$ be a finite place of $\bbq$. A set of vectors ${x_1, \cdots, x_n}$
in $\bbq _{\nu}^m$, is called orthonormal if  $ \|x_1\| = \|x_2\| = \cdots =
\|x_n\| = \|x_1 \wedge \cdots \wedge x_n\| = 1$, or equivalently when it
can be extended to a $\bbz_{\nu}$-base of $\bbz_{\nu}^m$.
\end{definition}
Recall that $\bbz_{\tilde{S}}=\bbq\cap\bbq_{\tilde{S}}\cdot\prod_{\nu\not\in S}\bbz_{\nu}$ is co-compact lattice in $\bbq_{\ts},$ where $\bbq$ is embedded diagonally. We normalize the Haar measure so that $\mu_\nu(\bbz_{\nu})=1$ for all finite places and $\mu_{\infty}([0,1])=1$ for the infinite place and we let $\mu$ be the product measure on $\bbq_{\ts}$. With this normalization $\bbz_{\ts}$ has co-volume one in $\bbq_{\ts}.$ For any $\x\in\p\bbq_{\nu}^{m_{\nu}}$, let $c(\x)=\p\|x_{\nu}\|_{\nu}$.  One clearly has $c(\x)\leq\|\x\|_S^{\kappa}.$ The following gives the description of of discrete $\bbz_{\ts}$-modules in $\tp\bbq_{\nu}^{m_{\nu}}.$   

\begin{lem}~\label{base}(cf. ~\cite[Proposition 7.2]{KT})

\noindent
If $\Delta$ is a discrete $\bbz_{\tilde{S}}$-submodule of $\prod_{\nu\in\tilde{S}} \bbq_{\nu}^{m_{\nu}}$,
 then there are $\x^{(1)},\cdots,\x^{(r)}$ in $\prod_{\nu\in\tilde{S}} \bbq_{\nu}^{m_{\nu}}$ so
 that $\Delta=\bbz_{\tilde{S}}\x^{(1)}\oplus\cdots\oplus\bbz_{\tilde{S}}\x^{(r)}$. Moreover $x_{\nu}^{(1)},\cdots,x_{\nu}^{(r)}$ are linearly independent over $\mathbb{Q}_{\nu}$ for any place $\nu\in {\tilde{S}}$
\end{lem}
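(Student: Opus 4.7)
The proof is the $S$-arithmetic analog of the classical structure theorem for discrete subgroups of $\bbr^n$. I would split the argument into two steps: (a) show that $\Delta$ is a finitely generated, hence free, $\bbz_{\ts}$-module; and (b) choose a basis for which the $\bbq_{\nu}$-independence at each place $\nu\in\ts$ holds.

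For step (a), observe that $\bbz_{\ts}$ is a PID (as a localization of $\bbz$) and that $\Delta$ is automatically torsion-free, being a submodule of the $\bbq_{\ts}$-module $\prod_{\nu\in\ts}\bbq_{\nu}^{m_\nu}$. Hence the structure theorem for modules over a PID produces a free basis once finite generation is established. To prove finite generation, I would induct on the rank invariant $\rho(\Delta)=\sum_{\nu\in\ts}\dim_{\bbq_{\nu}}\bigl(\bbq_{\nu}\cdot\pi_{\nu}(\Delta)\bigr)$, where $\pi_{\nu}$ denotes projection to the $\nu$-factor. Using the cocompactness of $\bbz_{\ts}$ in $\bbq_{\ts}$, one can select a nonzero $\x^{(1)}\in\Delta$ minimizing a natural product-of-norms quantity, show via discreteness that $\bbz_{\ts}\x^{(1)}$ is a pure rank-one direct summand, and pass to the quotient, which has strictly smaller rank invariant.

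Step (b) --- the $\bbq_{\nu}$-independence of $x_{\nu}^{(1)},\dots,x_{\nu}^{(r)}$ --- is the main obstacle. The first observation is that each projection $\pi_{\nu}:\Delta\to\bbq_{\nu}^{m_\nu}$ is injective: if $y\in\ker\pi_{\nu}$ were nonzero, then $y_\mu\ne 0$ for some $\mu\in\ts\setminus\{\nu\}$, and $\bbz_{\ts}\cdot y_\mu$ would fail to be discrete in $\bbq_{\mu}^{m_\mu}$ (since $\bbz_{\ts}$ is dense in $\bbq_{\mu}$ whenever $|\ts|\ge 2$), contradicting the discreteness of $\Delta$; the case $|\ts|=1$ reduces to the classical theorem. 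To upgrade injectivity to $\bbq_{\nu}$-linear independence, suppose for contradiction that $\sum a_i x_{\nu}^{(i)}=0$ for some $a_i\in\bbq_{\nu}$ not all zero. One then approximates the $a_i$ by elements $t_{i,k}\in\bbz_{\ts}$ chosen to lie in a common coset of a suitable ideal of $\bbz_{\ts}$ (for finite $\nu$, a power $p_{\nu}^k\bbz_{\ts}$; for archimedean $\nu$, an analogous ``small at $\nu$'' ideal supported on powers of primes in $S$), so that the $\mu$-components of $y_k=\sum_i t_{i,k}\x^{(i)}\in\Delta$ are \emph{fixed} for $\mu\ne\nu$ while the $\nu$-components tend to zero. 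This yields infinitely many distinct elements of $\Delta$ in a compact subset of $\prod_{\nu\in\ts}\bbq_{\nu}^{m_\nu}$, contradicting discreteness.

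The delicate point, and the main technical obstacle, lies in engineering the approximants $t_{i,k}$ for step (b): because $\bbz_{\ts}$ is a cocompact \emph{lattice} in $\bbq_{\ts}$ (not dense), one cannot simultaneously adjust all places. The remedy is to adjust only at $\nu$ while keeping the other components identically fixed, which requires an ideal of $\bbz_{\ts}$ with the correct local behavior at $\nu$ and trivial behavior elsewhere in $\ts$ --- this is exactly what the ideal $p_\nu^k\bbz_{\ts}$ (or its archimedean analog) provides.
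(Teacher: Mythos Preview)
The paper does not supply its own proof of this lemma; it is quoted from \cite[Proposition~7.2]{KT}, so there is nothing here to compare against directly. On its own merits: step~(a) and the injectivity argument opening step~(b) are correct. The gap is in your mechanism for upgrading injectivity of each $\pi_\nu$ to $\bbq_\nu$-linear independence of the $x_\nu^{(i)}$.

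You want $t_{i,k}\in\bbz_{\ts}$ approaching $a_i$ at $\nu$ while the $\mu$-components of $y_k=\sum_i t_{i,k}\x^{(i)}$ stay fixed (or at least bounded) for $\mu\ne\nu$. The ideal $p_\nu^k\bbz_{\ts}$ does not do this: multiplication by $p_\nu^k$ is harmless at the other \emph{finite} places of $S$, but at the archimedean place it blows up by the factor $p_\nu^k$. No nonzero ideal of $\bbz_{\ts}$ has ``trivial behavior'' at $\infty$, since every nonzero ideal contains arbitrarily large rationals; dually, no ideal is small at $\infty$. More decisively, by the product formula any nonzero $t\in\bbz_{\ts}$ that is bounded at every $\mu\ne\nu$ has $|t|_\nu$ bounded \emph{below}, so one cannot make the individual $t_{i,k}$ converge at $\nu$ while staying bounded elsewhere. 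Your diagnosis of the obstacle is right, but the proposed remedy does not overcome it.

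The fix is to argue with the whole lattice rather than coordinate-wise. Identify $\Delta$ with $\bbz_{\ts}^r$ via the basis, so that the inclusion $\Delta\hookrightarrow\prod_\mu\bbq_\mu^{m_\mu}$ extends to the $\bbq_{\ts}$-linear map $\Phi:\bbq_{\ts}^r\to\prod_\mu\bbq_\mu^{m_\mu}$ given placewise by $(c_i)\mapsto\sum_i c_i x_\mu^{(i)}$. A $\bbq_\nu$-dependence among the $x_\nu^{(i)}$ makes $K:=\ker\Phi$ nonzero, while $\bbz_{\ts}^r\cap K=\{0\}$ because the $\x^{(i)}$ are a $\bbz_{\ts}$-basis. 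Since $\bbz_{\ts}^r$ is a lattice in $\bbq_{\ts}^r$ and its intersection with the nonzero closed subgroup $K$ is not cocompact in $K$, the image of $\bbz_{\ts}^r$ in $\bbq_{\ts}^r/K$ is not discrete; as $\Phi$ factors through this quotient by a closed linear embedding, $\Delta=\Phi(\bbz_{\ts}^r)$ is not discrete either. This gives the contradiction uniformly for all $\nu\in\ts$ and replaces the coset-of-an-ideal device.
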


\begin{definition}~\label{primgood}
Let $\Gamma$ be a discrete $\bbz_{\tilde{S}}$-submodule
of $\prod_{\nu\in\tilde{S}} \bbq_{\nu}^{m_{\nu}}$ then a submodule $\Delta$ of $\Gamma$ is called a
``{\it primitive submodule}" if $\Delta=\Delta_{\bbq_{\tilde{S}}}\cap\Gamma,$ where $\Delta_{\bbq_{\tilde{S}}}$ is the $\bbq_{\tilde S}$-span of $\Delta.$
\end{definition}
\begin{remark}~\label{primgood2} Let $\Gamma$ and $\Delta$ be as in
definition~\ref{primgood} then $\Delta$ is a primitive submodule of $\Gamma$, if and
only if there exists a complementary $\bbz_{\tilde{S}}$-submodule $\Delta'\subseteq \Gamma$, i.e. $\Delta\cap\Delta'=0$ and $\Delta+\Delta'=\Gamma$.
\end{remark}


\section{Proof of theorem~\ref{>}}~\label{sec>}

\noindent
Fix $\q=(q_1,\cdots,q_n)$ with $|\q|_{\infty}<T$ and let $$\mathcal{A}_{\q}=\left\{\x\in\mathcal{A}|\exists\h q_0, \h |q_0|_{\infty}<T, \hspace{1mm}\begin{array}{l} \mbox{the hypothesis of the theorem}\\ \mbox{holds with}\hspace{2mm} \tq=(q_0,q_1,\cdots,q_n)\end{array} \right\}.$$
We will show that, $|\mathcal{A}_\q|<\delta\h T^{-n}|\mathbf{B}|,$ which then summing over all possible $\q$'s, will finish the proof.
\vspace{1mm}

\noindent
We set $$R=T^{\frac{1}{\kappa}},\hspace{2.5mm}\mbox{and}\hspace{2.5mm}\bbf(\x)=\ball.$$ 
One obviously has $\mathcal{A}_\q\subseteq \cup_{\x\in \mathcal{A}_\q}\bbf(\x).$ Let $\x\in\mathcal{A}_\q,$ \vspace{1mm} then there exists $q_0$ such hat if $\tq=(q_0,\cdots,q_n)$ \vspace{1mm}then $|\tq\cdot\fbf(\x)|_S^{\kappa}<\delta T^{-n-1}.$ We claim that inside $\bbf(\x),$ $q_0$ is the only choice with $|q_0|_{\infty}<T$ such that $\fbf(\bbf(\x)\cap\mathcal{A}_q),$ can be $\frac{1}{4R}$-approximated by $(q_0,q_1,\cdots,q_n).$ 
Indeed for any $\ybf\in\bbf(\x),$ we have $$\tq\cdot\fbf(\ybf)=\tq\cdot\fbf(\x)+(\q\g\fbf(\x))\cdot(\x-\ybf)+\sum_{i,j}\Phi_{ij}(\q\cdot\fbf)\h(x_i-y_i)(x_j-y_j).$$
Comparing the maximum possible values taking into consideration that $\epsilon<\frac{1}{2\kappa},$ we get that $|\tq\cdot\fbf(\ybf)|_S<\frac{1}{4R}$ hence we have $|\tilde{q}\cdot f_{\nu}(x_{\nu})|_{\nu}<\frac{1}{4R}$ for all $\nu\in S.$ To see the claim now notice that if there are $q_0^{1} \h\&\h q_0^{2}$ so that $|\widetilde{q^{(i)}}\cdot f_{\nu}|_{\nu}<\frac{1}{4R}$ we get $\p |q_0^1-q_0^2|_{\nu}<\frac{1}{4T},$ which contradicts the product formula.\vspace{1mm}

We now want to give an upper bound for $|\bbf(\x)\cap\mathcal{A}_\q|,$ where $\x\in\mathcal{A}_\q.$ This will be done using (i) and (ii) below

\vspace{2mm}
\noindent
(i) Pick any $\ybf\in\bbf(\x),$ one has $\|\g q.f_{\nu}(y_{\nu})-\g q.f_{\nu}(x_{\nu})\|_{\nu}<\|\g q.f_{\nu}(x_{\nu})\|_{\nu}/4,$ where $\nu\in S$ is arbitrary. To see this, let $\mathbf{z}=(z_{\nu})$ where $y_{\nu}=x_{\nu}+z_{\nu}$. In this setting, one has $$\partial_i
q\cdot f_{\nu}(y_{\nu})=\partial_i q\cdot f_{\nu}(x_{\nu})+\sum_{j}
\Phi_{j}^{(1)}(\partial_i q\cdot f_{\nu})(t_{\nu}^
{(j)},t_{\nu}^{(j-1)})z_{\nu}^j$$
$$=\partial_i q\cdot f_{\nu}(x_{\nu})+
\sum_{j}(\Phi_{ji}(q\cdot f_{\nu}(t_{\nu}^
{(j)},t_{\nu}^{(j-1)},t_{\nu}^{(j-1)}))+\Phi_{ji}(q\cdot f_{\nu}(t_{\nu}^
{(j)},t_{\nu}^{(j)},t_{\nu}^{(j-1)})))z_{\nu}^j,$$
where $t^{(i)}_{\nu}$'s are coming from the components of $x_{\nu}$ and $y_{\nu}$.
Hence $$|\partial_i q\cdot f_{\nu}(y_{\nu})-\partial_i
q\cdot f_{\nu}(x_{\nu})|_{\nu}<|z_{\nu}|_{\nu}\leq\frac{1}{4R\|\nabla
{q\cdot f}_{\nu}(x_{\nu})\|_{\nu}}\leq \frac{\|\nabla {q\cdot f}_{\nu}(x_{\nu})\|_{\nu}}{4},$$ as we
claimed.

\vspace{1mm}
\noindent
(ii) We now bound $|(\mathcal{A}_\q)_{\nu}\cap\nuball|$ from above for all $\nu\in S.$ Without loss of generality we may assume $|q\g f_{\nu}(x_{\nu})|_{\nu}=|q\cdot\d_1f_{\nu}(x_{\nu})|_{\nu}.$ Let $\ybf\in\bbf(\x)\cap\mathcal{A}_\q,$ then we have $$\tilde{q}\cdot f_{\nu}(y_{\nu}+\alpha e_1)-\tilde{q}\cdot f_{\nu}(y_{\nu})=q\cdot\d_1f_{\nu}(y_{\nu})\alpha+\Phi_{11}q\cdot f(y_{\nu}+\alpha e_1,y_{\nu},y_{\nu})\alpha^2.$$ As before a norm comparison, using the fact $\epsilon<\frac{1}{2\kappa}$ gives us 
\begin{equation}~\label{>1}|\tilde{q}\cdot f_{\nu}(y_{\nu}+\alpha e_1)-\tilde{q}\cdot f_{\nu}(y_{\nu})|_{\nu}=|q\cdot\d_1f_{\nu}(y_{\nu})|_{\nu}|\alpha|_{\nu}\h \end{equation}
Now set $\widehat{{y}_{\nu}}=(\widehat{y_{\nu}^{(1)}},y_{\nu}^{(2)},\cdots,y_{\nu}^{(d_{\nu})})$
for fixed $\{y_{\nu}^2,\cdots,y_{\nu}^{d_{\nu}}\}.$ Using~(\ref{>1}) we have the measure of
$$(\mathcal{A}_\q)_{\widehat{{y}_{\nu}}}=\{y_{\nu}^1\in\bbq_{\nu}|\hspace{1mm}
{y}_{\nu}=(y_{\nu}^{(1)},\widehat{y_{\nu}})\in
\nuball\cap(\mathcal{A}_\q)_{\nu}\}$$ is at most
$\frac{C''\h(\delta T^{-n-1})^{\frac{1}{\kappa}}}{\|\g q\cdot f_{\nu}(x_{\nu})\|_{\nu}},$ where $C'$ is a universal constant. This gives us $$|(\mathcal{A}_\q)_{\nu}\cap\nuball|\leq C'\delta^{\frac{1}{\kappa}} R\h T^{\frac{-n-1}{\kappa}}|\nuball|.$$ 

One uses now the fact that for non-Archimedean valuations two balls are either disjoint or one contains the other and gets $|(\mathcal{A}_\q)_{\nu}|\leq C'\delta^{\frac{1}{\kappa}} R\h T^{\frac{-n-1}{\kappa}}|\mathbf{B}_{\nu}|.$ Multiplying these inequalities for various $\nu$'s we get $$|\mathcal{A}_\q|\leq C\delta T^{-n}|\mathbf{B}|.$$
We now sum up over all possible $\q$'s and get $|\mathcal{A}|\leq C\delta|\mathbf{B}|,$ as we wished. 


\section{Good Functions}~\label{secgood}
In this section we will state conditions which guarantee the ``polynomial like" behavior of certain classes of maps on local fields. This notion which we refer to as ``{\it good function}" generalizes the class of functions considered in~\cite{EMS}. The definition was suggested in~\cite{KM}. In what follows we just recall statements which are needed in the course of proof of theorem~\ref{unipotent}. The proofs can be found in~\cite{MS}.

\begin{definition}~\label{defgood}(cf. \cite[Section 3]{KM}) 

\noindent
Let $C$ and $\alpha$ be positive real numbers. A function $\fbf$
defined on an open set $\mathbf{V}$ of $X=\prod_{\nu\in S}
\bbq_{\nu}^{m_\nu}$ is called $(C, \alpha)$-good, if for any open
ball $\mathbf{B}\subset \mathbf{V}$ and any $\vare >0$ one has
$$|\{\mathbf{x}\in \mathbf{B}|\hspace{1mm}
\|\mathbf{f}(\mathbf{x})\|< \vare\cdot\sup_{\mathbf{x}\in
\mathbf{B}}\|\mathbf{f}(\mathbf{x})\|\}|\leq C
\hspace{1mm}\vare^{\alpha}|\mathbf{B}|.$$ 
\end{definition} 

\begin{remark}~\label{remarkgood}
The following are consequences of the definition. Let $X, \mathbf{V}$ and $\mathbf{f}$ be as in definition~\ref{defgood}. Then
\begin{itemize}
\item[(i)] $\mathbf{f}$ is $(C,\alpha)$-good on $\mathbf{V}$ if and only if $\|\mathbf{f}\|$ is $(C,\alpha)$-good.

\item[(ii)] If $\mathbf{f}$ is $(C,\alpha)$-good on $\mathbf{V}$,
then so is $\lambda\mathbf{f}$ for any $\lambda\in \bbq_{S}.$

\item[(iii)] Let $I$ be an index set, if $\mathbf{f}_i$ is $(C,\alpha)$-good on
$\mathbf{V}$ for any $i\in I$, then so is $\sup_{i\in
I}\|\mathbf{f}\|.$

\item[(iv)] If $\mathbf{f}$ is $(C,\alpha)$-good on $\mathbf{V}$
and $c_1\leq
\|\mathbf{f}(\mathbf{x})\|_{S}/\|\mathbf{g}(\mathbf{x})\|_{S}\leq
c_2$, for any $x\in \mathbf{V},$ then $\mathbf{g}$ is
$(C({c_2}/{c_1})^{\alpha},\alpha)$-good on $\mathbf{V}.$
\end{itemize}
\end{remark}

As we mentioned above, the definition of  good functions seeks for a polynomial like behavior of maps. The next lemma guarantees that indeed polynomials are good.

\begin{lem}~\label{polygood}(cf.~\cite[Lemma 2.4]{KT})

\noindent
Let $\hspace{1mm}\nu$ be any place of $\bbq$ and $p\in\bbq_{\nu}[x_1,\cdots,x_d]$
 be a polynomial of degree not greater than $l$. Then there exists
 $\hspace{1mm}C=C_{d, l}\hspace{1mm}$ independent of $p$, such that $p$ is $(C,1/{dl})$-good on $\bbq_{\nu}$.
\end{lem}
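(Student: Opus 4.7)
The plan is to prove this lemma by induction on the number of variables $d$, with the one-variable case treated directly; the quantitative exponent $1/(dl)$ drops out of a single optimization in the inductive step.

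\textbf{Base case, $d=1$.} For a univariate polynomial $p$ of actual degree $m\le l$, translate and rescale so that the domain ball becomes $\bbz_\nu$, and then use Remark~\ref{remarkgood}(ii) to normalize $\sup_{\bbz_\nu}|p|_\nu=1$. In the non-archimedean case, this forces $p\in\bbz_\nu[x]$ with at least one coefficient a unit. Factor $p(x)=a_m\prod_{i=1}^m(x-\alpha_i)$ over $\bar{\bbq}_\nu$, with $|\cdot|_\nu$ uniquely extended. Any $x\in\bbz_\nu$ with $|p(x)|_\nu<\vare$ satisfies $|x-\alpha_i|_\nu<(\vare/|a_m|_\nu)^{1/m}$ for some $i$, and only the $\alpha_i$ at distance at most $1$ from $\bbz_\nu$ can contribute. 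Each such ultrametric ball has Haar measure bounded by a constant times $\vare^{1/m}\le\vare^{1/l}$, and there are at most $l$ of them. The archimedean case is the original estimate of Kleinbock--Margulis.

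\textbf{Inductive step.} Assuming the result in $d-1$ variables with exponent $1/((d-1)l)$, write $p(x_1,x')=\sum_{k=0}^{l}c_k(x')\,x_1^k$ where $x'=(x_2,\ldots,x_d)$ and each $c_k$ is a polynomial in $x'$ of degree $\le l$. Fix a product ball $\mathbf{B}=B_1\times B'$ with $B_1$ of radius $\rho$, and define $\phi(x')=\sup_{x_1\in B_1}|p(x_1,x')|_\nu$. In the non-archimedean case one has $\phi(x')=\max_k|c_k(x')|_\nu\rho^k$ exactly; in the archimedean case the same formula holds up to a multiplicative constant depending only on $l$. In either case, the inductive hypothesis together with Remark~\ref{remarkgood}(iii),(iv) makes $\phi$ a $(C',1/((d-1)l))$-good function on $B'$. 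Setting $M=\sup_{\mathbf{B}}|p|_\nu=\sup_{B'}\phi$ and splitting $\{x\in\mathbf{B}:|p(x)|_\nu<\vare M\}$ according to whether $\phi(x')\ge\delta$ or not, Fubini and the univariate bound yield
\[
\bigl|\{x\in\mathbf{B}:|p(x)|_\nu<\vare M\}\bigr|\;\le\;C_l\,(\vare M/\delta)^{1/l}\,|\mathbf{B}|\;+\;C'\,(\delta/M)^{1/((d-1)l)}\,|\mathbf{B}|.
\]
The choice $\delta=M\vare^{(d-1)/d}$ balances the two terms at $\vare^{1/(dl)}$, giving the desired $(C_{d,l},1/(dl))$-good estimate and closing the induction.

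\textbf{Main obstacle.} The one quantitatively delicate point is the identification (or comparison up to a constant) of $\phi(x')$ with $\max_k|c_k(x')|_\nu\rho^k$: in the non-archimedean setting this is the Gauss-norm property for the Tate algebra of the closed disc $B_1$, but in the archimedean setting one must invoke equivalence of norms on the finite-dimensional space of polynomials of degree $\le l$ in one variable, with the resulting constant absorbed via Remark~\ref{remarkgood}(iv). Given this and the base case, the only step with any genuine content is the optimization $\delta=M\vare^{(d-1)/d}$, which is precisely what produces the sharp exponent $1/(dl)$ instead of a weaker bound such as $1/l^d$.
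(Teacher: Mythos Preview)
The paper itself does not prove this lemma; it is quoted from \cite[Lemma~2.4]{KT}. Your overall architecture---induction on $d$ with a Fubini-and-optimize step---is the standard one, and your inductive step is correct: identifying $\phi(x')$ with $\max_k|c_k(x')|_\nu\rho^k$ (exactly via the Gauss norm in the non-archimedean case, up to constants via equivalence of norms in the archimedean case) and balancing with $\delta=M\vare^{(d-1)/d}$ is exactly what produces the exponent $1/(dl)$.

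There is, however, a genuine gap in your non-archimedean base case. After normalizing to $\sup_{\bbz_\nu}|p|_\nu=1$ you factor $p(x)=a_m\prod_{i=1}^m(x-\alpha_i)$, cover the sublevel set by balls of radius $(\vare/|a_m|_\nu)^{1/m}$, and then assert each such ball has measure ``a constant times $\vare^{1/m}$''. But the normalization only forces \emph{some} coefficient to be a unit, not the leading one, so $|a_m|_\nu$ is not bounded below. Take for instance $p(x)=p_\nu^{N}x^2+x$: it has Gauss norm $1$ on $\bbz_\nu$ while $|a_2|_\nu=|p_\nu|_\nu^{N}$ is arbitrarily small, and your balls then have radius $(\vare\,|p_\nu|_\nu^{-N})^{1/2}$, which is not $O(\vare^{1/2})$ uniformly in $N$. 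The remark that roots far from $\bbz_\nu$ ``do not contribute'' does not rescue this either, since once $\vare>|a_m|_\nu$ the ball around a far root has radius exceeding $1$ and swallows all of $\bbz_\nu$.

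The repair is short. Split the roots according to whether $|\alpha_i|_\nu\le1$ or $|\alpha_j|_\nu>1$; for $x\in\bbz_\nu$ the latter factors are constant equal to $|\alpha_j|_\nu$, so $|p(x)|_\nu=A\prod_{|\alpha_i|_\nu\le1}|x-\alpha_i|_\nu$ with $A=|a_m|_\nu\prod_{|\alpha_j|_\nu>1}|\alpha_j|_\nu$. Each surviving factor is at most $1$ and $\sup_{\bbz_\nu}|p|_\nu=1$, hence $A\ge1$; now your covering argument gives balls of radius $(\vare/A)^{1/k}\le\vare^{1/k}\le\vare^{1/l}$ as desired. Alternatively, one can bypass root-factoring entirely and argue by Lagrange interpolation (pick $l+1$ well-separated points in the sublevel set and bound the sup-norm via the Vandermonde), which is the route taken in \cite{KM} and \cite{KT}.
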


Next theorem ``relates" the definition of a good function to conditions on its derivatives. 
 
\begin{thm}~\label{good}(cf.~\cite[Theorem 4.4]{MS} )
Let $V_1, \cdots, V_d$ be nonempty open sets in $\bbq_{\nu}.$ Let
$k\in \bbn$, $A_1, \cdots, A_d, A'_1, \cdots, A'_d$ be positive real
numbers and $f\in C^k(V_1\times \cdots \times V_d)$ be such that
$$A_i\leq |\Phi_i^{k}f|_{\nu}\leq A'_i \hspace{2mm}\mbox{\rm{on}}
\bigtriangledown^{k+1}V_i\times \prod_{j\neq i} V_j,
\hspace{1mm}i=1\cdots, d .$$ Then $f$ is $(C, \alpha)$-good on
$V_1\times\cdots\times V_d$, where $C$ and $\alpha$ depend only
on $k, d, A_i$, and $A'_i$ .
\end{thm}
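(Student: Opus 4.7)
The plan is a double induction: first on the dimension $d$, and within each dimension by reducing the $C^k$-with-bounded-difference-quotient hypothesis to the polynomial goodness supplied by Lemma~\ref{polygood}.

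\textbf{One-variable case $(d=1)$.} Fix a ball $B\subseteq V_1$, a base point $x_0\in B$, and let $p(x)=\sum_{j=0}^{k}\frac{f^{(j)}(x_0)}{j!}(x-x_0)^j$ be the degree-$k$ Taylor polynomial of $f$ at $x_0$. The leading coefficient has $\nu$-absolute value $|\bar\Phi^k f(x_0,\dots,x_0)|_\nu\in[A_1,A_1']$, extended by continuity from $\nabla^{k+1}V_1$. The $C^k$ Taylor formula yields
\[
f(x)-p(x)=\bigl(\bar\Phi^k f(x,x_0,\dots,x_0)-\bar\Phi^k f(x_0,\dots,x_0)\bigr)(x-x_0)^k,
\]
so $|f-p|_\nu\le\omega_B\cdot|x-x_0|_\nu^k$ where $\omega_B$ is the oscillation of $\bar\Phi^k f$ on $B^{k+1}$. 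Since $\bar\Phi^k f$ is continuous, $\omega_B$ tends to $0$ with $\mathrm{diam}(B)$, and I would choose $B$ small enough that $\omega_B<A_1/2$. On such a $B$ one verifies $\sup_B|p|_\nu\ge A_1\,\mathrm{diam}(B)^k$ and $|f-p|_\nu\le\tfrac12\sup_B|p|_\nu$, so $|f|_\nu$ and $|p|_\nu$ agree up to a factor of $2$ on $B$. Lemma~\ref{polygood} gives $(C_{1,k},1/k)$-goodness of $p$, and Remark~\ref{remarkgood}(iv) transfers the estimate to $f$. A covering argument extends the bound to all of $V_1$, with constants depending only on $k,A_1,A_1'$.

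\textbf{Inductive step $(d>1)$.} Fix $\mathbf B=B_1\times\cdots\times B_d$ and write $M=\sup_{\mathbf B}|f|_\nu$. For each fixed $\widehat\x=(x_2,\dots,x_d)$, the slice $x_1\mapsto f(x_1,\widehat\x)$ satisfies the one-variable hypothesis, hence is $(C_1,1/k)$-good on $B_1$ uniformly in $\widehat\x$. By Fubini,
\[
\bigl|\{\x\in\mathbf B:|f(\x)|_\nu<\varepsilon M\}\bigr|=\int_{\prod_{i\ge 2}B_i}\bigl|\{x_1\in B_1:|f(x_1,\widehat\x)|_\nu<\varepsilon M\}\bigr|\,d\widehat\x.
\]
I would then split the integration using a fat/thin dichotomy: call $\widehat\x$ \emph{fat} if $M_1(\widehat\x):=\sup_{x_1\in B_1}|f(x_1,\widehat\x)|_\nu\ge M/2$, and \emph{thin} otherwise. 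Fat fibers are controlled by the one-variable estimate with level $\varepsilon M\le 2\varepsilon M_1(\widehat\x)$, giving $|\{x_1:|f|<\varepsilon M\}|\le C_1(2\varepsilon)^{1/k}|B_1|$. For the thin fibers, note that for each fixed $x_1$ the map $\widehat\x\mapsto f(x_1,\widehat\x)$ satisfies the hypotheses in the remaining $d-1$ variables, so by the inductive hypothesis it is good in $\widehat\x$ uniformly in $x_1$, and Remark~\ref{remarkgood}(iii) then yields that $M_1$ itself is good in $\widehat\x$. Combining the two contributions produces the desired $(C,\alpha)$-good bound on $\mathbf B$.

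\textbf{Main obstacle.} The delicate point is the passage from separate one-variable goodness to joint goodness in the inductive step: one must verify that the auxiliary function $M_1(\widehat\x)$ inherits a good-function bound in the transverse coordinates, which relies on stability of the good-function class under suprema (Remark~\ref{remarkgood}(iii)) together with uniformity of the one-variable estimate in those parameters. Keeping track of the final exponent $\alpha$ through the two layers of induction is mechanical but essential: each layer deteriorates $\alpha$ by a factor involving $k$ and $d$, and the ultimate $\alpha$ must remain a positive constant depending only on $k,d,A_i,A_i'$.
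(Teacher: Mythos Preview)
The paper does not actually supply a proof of this theorem: Section~\ref{secgood} only states the result and refers to \cite[Theorem~4.4]{MS} for the argument. So there is no in-paper proof to compare against, and your proposal must be judged on its own merits. On those merits, both stages of your induction contain genuine gaps.

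\textbf{One-variable case.} Your Taylor remainder bound $|f-p|_\nu\le\omega_B\,|x-x_0|_\nu^{\,k}$ is correct, but the crucial smallness condition $\omega_B<A_1/2$ depends on the \emph{modulus of continuity} of $\bar\Phi^k f$, which is nowhere controlled by the hypotheses $A_1\le|\Phi^k f|_\nu\le A_1'$. (Ultrametrically, two values each of norm in $[A_1,A_1']$ can differ by as much as $A_1'$, so the pointwise bounds alone never force $\omega_B$ to be small.) Consequently the radius $r_0$ below which your argument applies depends on $f$, not only on $k,A_1,A_1'$. The sentence ``a covering argument extends the bound to all of $V_1$'' does not repair this: $(C,\alpha)$-goodness on a disjoint cover by small balls does not in general imply goodness on the large ball with comparable constants, because the sublevel-set estimate on each small ball $B_i$ is relative to $\sup_{B_i}|f|$, not to the global $\sup_B|f|$. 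The upshot is that your constants depend on $f$, contradicting the conclusion. The argument in \cite{MS} avoids Taylor expansion and works directly with the difference quotients; one route is induction on $k$ via the observation that $g(x):=\Phi^1 f(x,x_0)$ satisfies $A_1\le|\Phi^{k-1}g|_\nu\le A_1'$, so the inductive hypothesis applies to $g$, and one then relates sublevel sets of $f$ to those of $g$.

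\textbf{Inductive step.} Your fat/thin split with threshold $M/2$ does not close. Fat fibres are fine, but the thin contribution is bounded only by $|B_1|\cdot|\{\widehat\x:\,M_1(\widehat\x)<M/2\}|\le C'(1/2)^{\alpha'}|\mathbf B|$, a fixed positive fraction of $|\mathbf B|$ that does not tend to $0$ with $\varepsilon$. The correct dichotomy uses an $\varepsilon$-dependent threshold, say $M_1(\widehat\x)\ge \varepsilon^{\beta}M$ for some $0<\beta<1$: then fat fibres give $\le C_1\varepsilon^{(1-\beta)/k}|\mathbf B|$, the goodness of $M_1$ (which you correctly deduce from Remark~\ref{remarkgood}(iii)) gives the thin set measure $\le C'\varepsilon^{\beta\alpha'}|\mathbf B|$, and optimising $\beta$ yields a positive exponent. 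This is a fixable slip, but the one-variable gap above is structural and requires a different argument.
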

\noindent

We need to show some families of functions are good with uniform constants. The following gives a condition to guarantee such assertion. The proof of this uses compactness arguments and theorem~\ref{good} above. In our setting we actually will use the proceeding corollary.  

\begin{thm}~\label{compact}(cf.~\cite[theorem 4.5]{MS})

\noindent
Let $U$ be an open neighborhood of $x_0\in\bbq_{\nu}^m$ and let
$\mathcal{F}\subset C^l(U)$ be a family of functions $f : U
\rightarrow \bbq_{\nu}$ such that
\begin{itemize}
\item[1.] ${\nabla f\in \mathcal{F}}$ is compact in $C^{l-1}(U)$

\item[2.] $ \inf_{f\in \mathcal{F}} \sup_{|\beta| \leq l}|\partial_{\beta}f(x_0)|> 0.$
\end{itemize}
Then there exist a neighborhood $V\subseteq U$ of $x_0$ and
positive numbers $C=C(\mathcal{F})$ and
$\alpha=\alpha(\mathcal{F})$ such that for any $f\in \mathcal{F}$
\begin{itemize}
\item[(i)] $f$ is $(C, \alpha)$-good on $V$.

\item[(ii)] $\nabla f$ is $(C, \alpha)$-good on $V$.
\end{itemize}
\end{thm}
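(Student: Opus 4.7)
The plan is to first establish $(C_f,\alpha_f)$-goodness of each individual $f\in\mathcal{F}$ by verifying the hypotheses of Theorem~\ref{good}, and then to upgrade to $f$-independent constants by means of the $C^{l-1}$-compactness of $\{\nabla f:f\in\mathcal{F}\}$ and a finite cover argument.

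For fixed $f\in\mathcal{F}$, hypothesis~(2) provides a multi-index $\beta$ with $|\beta|\le l$ and $|\partial_\beta f(x_0)|_\nu\ge c$, where $c>0$ is independent of $f$. Writing $\beta$ with coordinate multiplicities $(k_1,\ldots,k_d)$ and using the identity $\partial_i^{k_i}f=k_i!\,\bar{\Phi}_i^{k_i}f(x,\ldots,x)$ together with continuity of $\Phi_i^{k_i}f$, one obtains a lower bound $A_i>0$ for $|\Phi_i^{k_i}f|_\nu$ on $\bigtriangledown^{k_i+1}V_i\times\prod_{j\ne i}V_j$ for a sufficiently small neighborhood $V_f=\prod V_i$ of $x_0$; the upper bounds $A'_i$ are automatic from $f\in C^l(U)$. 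Theorem~\ref{good} then yields conclusion~(i) on $V_f$. For conclusion~(ii), apply the same reasoning to each component $\partial_i f$ of $\nabla f$, which is $C^{l-1}$ by hypothesis~(1), and invoke Remark~\ref{remarkgood}(iii) on $\|\nabla f\|=\sup_i|\partial_i f|$. The degenerate sub-case $|\beta|=0$ is settled by continuity: $|f|\ge c/2$ on a neighborhood makes $f$ trivially good, and $\nabla f$ is handled by an ancillary argument exploiting higher-order Taylor behavior.

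To uniformize, cover the $C^{l-1}$-precompact family $\{\nabla f:f\in\mathcal{F}\}$ by finitely many $\delta$-balls centered at gradients $\nabla g_1,\ldots,\nabla g_N$ of representatives $g_i\in\mathcal{F}$. Apply the single-function argument to each $g_i$, obtaining $V_i$, bounds $A_{i,j},A'_{i,j}$, and constants $(C_i,\alpha_i)$. Choose $\delta<\tfrac{1}{2}\min_{i,j}A_{i,j}$; then any $f$ with $\|\nabla f-\nabla g_i\|_{C^{l-1}(U)}<\delta$ inherits the bounds $A_{i,j}/2\le|\Phi_j^{k_{i,j}}f|_\nu\le 2A'_{i,j}$ on $V_i$, and Theorem~\ref{good} delivers $(C_i',\alpha_i)$-goodness with $f$-independent constants. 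Setting $V=\bigcap_i V_i$, $C=\max_i C_i'$, $\alpha=\min_i\alpha_i$ concludes the proof.

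The principal obstacle is uniformity in the exceptional branch where only $|\beta|=0$ realizes the supremum in~(2): $C^{l-1}$-closeness of $\nabla f$ to $\nabla g_i$ does not transfer $|g_i(x_0)|\ge c$ to $|f(x_0)|$. This is resolved by partitioning $\mathcal{F}$ according to whether~(2) is witnessed at $x_0$ by some $\beta$ with $|\beta|\ge 1$ or only by $\beta=0$; the first subfamily is treated by the covering argument above, and the second by direct estimates that use ultrametric local constancy of $f$ on a small $p$-adic ball to render $f$ trivially good and that control $\nabla f$ via the non-vanishing higher-order jet data preserved under $C^{l-1}$-compactness.
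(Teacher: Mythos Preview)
The paper does not prove this theorem; it cites~\cite[Theorem~4.5]{MS} and gives only the one-line description ``The proof of this uses compactness arguments and theorem~\ref{good} above.'' Your overall strategy---verify the hypotheses of Theorem~\ref{good} for individual $f$, then uniformize via the $C^{l-1}$-compactness of $\{\nabla f\}$ and a finite cover---matches that description exactly.

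That said, there is a genuine gap in how you invoke Theorem~\ref{good}. From a single multi-index $\beta=(k_1,\ldots,k_d)$ with $|\partial_\beta f(x_0)|_\nu\ge c$ you cannot conclude that $|\Phi_i^{k_i}f|_\nu\ge A_i>0$ near $x_0$ for \emph{every} coordinate~$i$: a nonzero mixed partial gives no information about the pure partials $\partial_i^{k_i}f$ individually, and in any case Theorem~\ref{good} is stated for a \emph{single} order~$k$ common to all directions, not for a tuple $(k_1,\ldots,k_d)$. The correct route (as carried out in~\cite{MS}) is more indirect: one uses the nonvanishing of some $\partial_\beta f(x_0)$ to reduce, after a change of variables or an induction on the dimension, to a situation where Theorem~\ref{good} genuinely applies in each coordinate with a common~$k$. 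Your compactness step is fine once this is repaired, since the bounds $A_i,A'_i$ that feed into Theorem~\ref{good} are continuous in the $C^{l-1}$-topology on $\nabla f$.

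A second, smaller issue: in your final paragraph you appeal to ``ultrametric local constancy of $f$ on a small $p$-adic ball'' to handle the $|\beta|=0$ branch. General $C^l$ functions on $\bbq_\nu$ are not locally constant; what is true is only that $|f|_\nu$ is locally constant where $f\ne 0$, which already suffices to make $f$ trivially $(C,\alpha)$-good there---but this does nothing for $\nabla f$, and your ``ancillary argument exploiting higher-order jet data'' is left entirely unspecified. This branch needs an actual argument, not a placeholder.
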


\begin{cor}~\label{linear}
Let $f_1, f_2, \cdots, f_n$ be analytic functions from a neighborhood $U$ of $x_0$ in $\bbq_{\nu}^m$ to $\bbq_{\nu}$, such that $1,\hspace{1mm}f_1, f_2, \cdots, f_n$ are linearly independent on any neighborhood of $x_0$, then
\begin{itemize}
\item[(i)] There exist a neighborhood $V$ of $x_0$, $C \hspace{1mm}\& \hspace{1mm}\alpha>0$ such that any linear combination of $\hspace{1mm}1, f_1, f_2, \cdots, f_n$ is $(C, \alpha)$-good on $V$.

\item[(ii)] There exist a neighborhood $V'$ of $x_0$, $C' \hspace{1mm}\& \hspace{1mm}\alpha'>0$ such that for any $\hspace{1mm}c_1, c_2, \cdots, c_n \in \bbq_{\nu}$, $\|\sum_{k=1}^{n} c_i \nabla f_i\|$ is $(C', \alpha')$-good.
\end{itemize}
\end{cor}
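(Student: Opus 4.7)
Both parts will be reduced to Theorem~\ref{compact} by normalizing the coefficient vector. For (i), introduce the family
$$\mathcal{F}=\Bigl\{c_0+\sum_{i=1}^{n}c_if_i\ :\ (c_0,\ldots,c_n)\in\bbq_{\nu}^{n+1},\ \max_{0\le i\le n}|c_i|_{\nu}=1\Bigr\}.$$
The normalized coefficient set is closed and bounded in $\bbq_{\nu}^{n+1}$, hence compact, so $\mathcal{F}$ and $\nabla\mathcal{F}$ are compact in $C^{l}(U)$ and $C^{l-1}(U)$ for every $l$; this verifies hypothesis~1 of Theorem~\ref{compact}. By remark~\ref{remarkgood}(ii), uniform $(C,\alpha)$-goodness of the members of $\mathcal{F}$ on a common neighborhood immediately upgrades to $(C,\alpha)$-goodness of every linear combination of $1,f_1,\ldots,f_n$.

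The crux is hypothesis~2: the existence of some integer $l$ with $\inf_{f\in\mathcal{F}}\sup_{|\beta|\le l}|\partial_{\beta}f(x_0)|_{\nu}>0$. For each $l$ set
$$Z_l=\Bigl\{(c_0,\ldots,c_n)\ :\ \max_i|c_i|_{\nu}=1,\ \partial_{\beta}\bigl(c_0+\sum c_if_i\bigr)(x_0)=0\ \text{for all}\ |\beta|\le l\Bigr\}.$$
These form a decreasing sequence of compact sets. A coefficient vector in $\bigcap_l Z_l$ would correspond to an analytic linear combination whose full Taylor series at $x_0$ vanishes, hence which is identically zero on some neighborhood of $x_0$; linear independence of $1,f_1,\ldots,f_n$ on every neighborhood of $x_0$ then forces all coefficients to be zero, contradicting the normalization. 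So $\bigcap_l Z_l=\emptyset$, and the finite intersection property produces some $l$ with $Z_l=\emptyset$. Continuity of the $\sup$ functional on the compact normalized coefficient sphere then yields the required strict positive lower bound, and Theorem~\ref{compact}(i) supplies the desired $V,C,\alpha$.

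Part (ii) is entirely parallel: apply the same argument to $\mathcal{F}'=\{\sum_{i=1}^{n}c_if_i:\max_i|c_i|_{\nu}=1\}$, using linear independence of $f_1,\ldots,f_n$ alone (a formal consequence of the hypothesis on $1,f_1,\ldots,f_n$) to rule out a nonempty intersection of the analogous $Z'_l$'s. This time invoke conclusion~(ii) of Theorem~\ref{compact} to get $V',C',\alpha'$ such that $\nabla f=\sum c_i\nabla f_i$ is $(C',\alpha')$-good for every $f\in\mathcal{F}'$; by remark~\ref{remarkgood}(i) this is exactly that $\|\sum c_i\nabla f_i\|$ is $(C',\alpha')$-good, and remark~\ref{remarkgood}(ii) handles rescaling to arbitrary $(c_1,\ldots,c_n)$. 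The only non-routine step in either part is the Taylor-series plus linear-independence argument used to verify hypothesis~2 of Theorem~\ref{compact}; everything else is bookkeeping.
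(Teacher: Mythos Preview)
Your proof is correct and follows precisely the approach the paper intends: the corollary is stated immediately after Theorem~\ref{compact} with no proof, so the paper is implicitly asserting that it follows from that theorem by the normalization-plus-compactness argument you carry out. The Taylor-series plus finite-intersection argument you give for verifying hypothesis~2 is the standard way to exploit analyticity and linear independence, and the rescaling via Remark~\ref{remarkgood}(ii) is exactly what is needed to pass from the normalized family back to arbitrary linear combinations.
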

\noindent
We now recall the notion of {\it skew gradient} from \cite[Section 4] {BKM}. For $i=1,2$ let $g_i:\mathbb{Q}_{\nu}^d\rightarrow\mathbb{Q}_{\nu}$ be two $C^1$ functions. Define then $\widetilde\g(g_1,g_2):=g_1\g g_2-g_2\g g_1.$ This, in some sense, measures how far two functions are from being linearly dependent. The following is the main technical result of section 4 in~\cite{MS}. Let us remark that this theorem is responsible for the fact that the results of this paper are in the setting of analytic functions rather than $C^k$ functions. In Archimedean case the proof of this fact uses polar coordinates which is not available in non-Archimedean setting. 

\begin{thm}~\label{tartibat}(cf.~\cite[Theorem 4.7]{MS})

\noindent
Let $U$ be a neighborhood of $x_0\in \bbq_{\nu}^m$ and $f_1, f_2,
\cdots, f_n$ be analytic functions from $U$ to $\bbq_{\nu}$, such that $1,\hspace{.5mm}f_1, f_2,\cdots, f_n$ are linearly independent on any open subset of $U.$ Let $F=(f_1, \cdots,
f_n)$ and $$\mathcal{F}= \{(D_1\cdot F,\hspace{1mm} D_2\cdot F+a)|\hspace{1mm}
\|D_1\|=\|D_2\|=\|D_1\wedge D_2\|=1,\hspace{1mm}
 D_1, D_2\in \bbq_{\nu}^n,\hspace{1mm} a\in \bbq_{\nu}\}.$$
 Then there exists a neighborhood $V\subseteq U$ of $x_0$ such that
\begin{itemize}
\item[(i)] For any neighborhood $B\subseteq V$ of $x_0$,
 there exists $\rho=\rho(\mathcal{F}, B)$ such that
 $\sup_{x\in B} \parallel \widetilde{\nabla} g(x)\parallel \geq \rho$ for any $g\in \mathcal{F}$.

\item[(ii)] There exist $C$ and $\hspace{1mm}\alpha,$ positive
numbers, such that for any $g\in \mathcal{F},$ $\| \widetilde{\nabla} g\|$ is $(C, \alpha)$-good on $V.$ 
\end{itemize}
\end{thm}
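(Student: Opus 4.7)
I will expand $\widetilde{\nabla} g$ componentwise, normalize by a scalar to compactify the parameter family, use linear independence of $1,f_1,\ldots,f_n$ to rule out identical vanishing, then apply Theorem~\ref{compact} uniformly. With $g_1=D_1\cdot F$ and $g_2=D_2\cdot F+a$ a short calculation gives
\[ [\widetilde{\nabla} g]_i = \sum_{j<k}(D_1\wedge D_2)_{jk}\bigl(f_j\partial_i f_k - f_k\partial_i f_j\bigr) - a\sum_j D_1^{(j)}\partial_i f_j. \]
Let $M:=\max(1,|a|_\nu)$. The coefficient vector $((D_1\wedge D_2)_{jk},\,-aD_1^{(j)})$ has sup-norm exactly $M$ (since $\|D_1\wedge D_2\|=1$ and $|a|\,\|D_1\|=|a|$), so after dividing by $M$ it lies on the unit sphere of a fixed finite-dimensional $\bbq_\nu$-space. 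Together with the already-compact constraint on $(D_1,D_2)$, this compactifies the family: $\theta\mapsto \widetilde{\nabla} g/M$ is a continuous map from a compact parameter space $\Theta$ into $C^\infty(U,\bbq_\nu^m)$.

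\textbf{Part (i).} I claim no $g\in\mathcal F$ has $\widetilde{\nabla} g\equiv 0$ on any open $\Omega\subseteq U$. If $g_2\equiv 0$ on $\Omega$, then $a+\sum_j D_2^{(j)}f_j\equiv 0$ violates the linear independence of $1,f_1,\ldots,f_n$, forcing $\|D_2\|=0$. Otherwise, on a small ultrametric ball $\Omega'\subseteq\Omega$ where $g_2\neq 0$, one has $\nabla(g_1/g_2)=-\widetilde{\nabla} g/g_2^2\equiv 0$; analyticity on $\Omega'$ (via the power-series criterion) forces $g_1=cg_2$ for some $c\in\bbq_\nu$, so $(D_1-cD_2)\cdot F-ca\equiv 0$. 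Linear independence yields $D_1=cD_2$, whence $\|D_1\wedge D_2\|=0$ --- contradicting $\|D_1\wedge D_2\|=1$. For any ball $B\subseteq V$ the map $\theta\mapsto\sup_{x\in B}\|(\widetilde{\nabla} g/M)(x)\|$ is lower semicontinuous and strictly positive on the compact $\Theta$, so admits a positive infimum $\rho$; since $M\geq 1$, (i) follows.

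\textbf{Part (ii).} I apply Theorem~\ref{compact} to the compact family $\{\widetilde{\nabla} g/M:\theta\in\Theta\}$ (treated as a scalar family via $\|\widetilde{\nabla} g/M\|$ and Remark~\ref{remarkgood}(i)). Compactness of the gradient family in $C^{l-1}(U)$ is automatic from continuity of the parameterization. The required uniform non-degeneracy
\[ \inf_{\theta\in\Theta}\sup_{|\beta|\leq l}\|\partial_\beta(\widetilde{\nabla} g/M)(x_0)\|>0 \]
for some $l$ is obtained from the non-vanishing of Part (i) by a standard open-cover argument: for each $\theta$, analyticity combined with the non-vanishing of $\widetilde{\nabla} g/M$ on every open neighborhood of $x_0$ produces some order $l(\theta)$ and $\delta(\theta)>0$ with a nonzero derivative of order $\leq l(\theta)$ at $x_0$; the set of $\theta$ where this holds is open by continuity of each $\partial_\beta$ in $\theta$, and compactness of $\Theta$ gives a finite uniform choice of $l$ and $\delta$. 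Theorem~\ref{compact} then produces a neighborhood $V\subseteq U$ of $x_0$ and constants $C,\alpha>0$ making $\widetilde{\nabla} g/M$ uniformly $(C,\alpha)$-good on $V$; multiplying back by the scalar $M$ preserves $(C,\alpha)$-goodness (Remark~\ref{remarkgood}(ii)), which gives (ii).

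\textbf{Main obstacle.} The principal obstruction is the unboundedness of $a\in\bbq_\nu$: $\mathcal F$ is not a priori compact, so natural compactness arguments break down. Normalizing by $M=\max(1,|a|_\nu)$ is the critical trick that places the ``bilinear in $F$'' piece $(D_1\wedge D_2)$ and the ``linear in $F$'' piece $aD_1$ on the same footing, putting the coefficient vector on a bounded set. A secondary subtlety --- extracting a uniform order of non-degeneracy at $x_0$ from the non-vanishing argument --- is handled by the lower-semicontinuity / finite-cover argument above on the compact parameter space $\Theta$.
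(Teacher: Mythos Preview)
The paper does not give a proof of this theorem: it is quoted from \cite{MS}, and the surrounding text explicitly flags it as ``the main technical result'' whose proof requires analyticity in an essential way (the Archimedean analogue uses polar coordinates, unavailable $p$-adically). So there is no in-paper argument to compare against; I assess your sketch on its own merits.

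Your Part~(i) is essentially correct. The non-vanishing argument via $\nabla(g_1/g_2)=-\widetilde\nabla g/g_2^2$ together with linear independence of $1,f_1,\dots,f_n$ is sound, and a lower-semicontinuity/compactness argument then yields the uniform $\rho$. One slip: $M=\max(1,|a|_\nu)$ is a positive \emph{real} number, so you cannot divide a $\bbq_\nu$-valued function by it. Normalize instead by a $p$-adic scalar $\mu$ with $|\mu|_\nu=M$ (say $\mu=1$ when $|a|_\nu\le 1$ and $\mu=a$ otherwise). You then also need to check that the closure of the normalized family---which picks up the limit functions $-\nabla(D_1\cdot F)$ as $|a|_\nu\to\infty$---still satisfies the non-vanishing; it does, by the same linear-independence argument, but you should say so.

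Part~(ii) has a genuine gap. Theorem~\ref{compact} is stated for a family of \emph{scalar} $C^l$ functions $f:U\to\bbq_\nu$, and its hypothesis~(2) demands that \emph{each} $f$ in the family have a nonzero low-order derivative at $x_0$. Your family $\{\widetilde\nabla g/\mu\}$ is vector-valued, and the workaround ``treated as a scalar family via $\|\widetilde\nabla g/\mu\|$ and Remark~\ref{remarkgood}(i)'' fails: the max-norm is neither $\bbq_\nu$-valued nor $C^l$, so Theorem~\ref{compact} does not apply to it, and Remark~\ref{remarkgood}(i) concerns only the \emph{conclusion} of goodness, not the hypotheses of Theorem~\ref{compact}. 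The natural alternative---pooling all coordinates $[\widetilde\nabla g/\mu]_i$ into one scalar family and invoking Theorem~\ref{compact}(i)---also fails hypothesis~(2), because individual components can vanish identically (e.g.\ if the $f_j$ are independent of the $i$-th variable, the $i$-th component is zero for every $g$). Your open-cover argument establishes only that for each parameter \emph{some} component has a nonzero derivative at $x_0$; that is strictly weaker than what Theorem~\ref{compact} requires. Passing from ``some component is nondegenerate, varying with $\theta$'' to uniform $(C,\alpha)$-goodness of the full vector is precisely the nontrivial step the paper singles out, and your sketch does not bridge it.
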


\section{Theorem~\ref{<} and lattices}~\label{sec<}
In this section we prove theorem~\ref{<}. This is done with the aid of converting the problem into a question about quantitative recurrence properties of some ``special flows" on the space of discrete $\bbz_{\ts}$-modules. This dynamical translation was the break through by Kleinbock and Margulis, see in~\cite{KM}. This point of view was then followed in~\cite{BKM}, ~\cite{KT} and~\cite{MS}.
\vspace{1mm}
 
Till now we essentially worked in a single non-Archimedean place. From this point on we need to work with all the places in $\tilde{S}=\{\infty\}\cup S$, simultaneously. Let us fix some further notations to be used in the sequel. We consider the zero function on the real component and set $d_{\infty}=0$. Let $m_{\nu}=n+d_{\nu}+1$ for all $\nu\in \tilde{S}.$ Let $\{e_{\nu}^{0},e_{\nu}^{*1},\cdots,e_{\nu}^{*d_{\nu}},e_{\nu}^1,\cdots,e_{\nu}^n\}$ be the standard basis for $\bbq_{\nu}^{m_{\nu}}.$ Now $\e_i=(e_{\nu}^i)_{\nu\in\tilde{S}}$ gives a basis for $X=\tp \bbq_{\nu}^{m_{\nu}}$ as a $\bbq_{\ts}$-module. 
Define the $\bbz_{\ts}$-module $\Lambda$ to be the $\bbz_{\ts}$-span of $\{\e_0,\cdots,\e_n\}.$ Let $\fbf$ be as in the statement of theorem~\ref{<}. Then for any $\x\in\mathbf{U}$ define 


$$\u_{\x}=\left(\left(\begin{array}{ccc}1&0&f_{\nu}(x_{\nu})\\0&I_{d_{\nu}}&\g f_{\nu}(x_{\nu})
\\0&0&I_n\end{array}\right)\right)_{\nu\in \tilde{S}}$$ 
Note that in the real place we have the identity matrix $I_{n+1}.$ If $0_{d_{\nu}}$ denotes the $d_{\nu}\times1$ zero block then one has
$$\u_{\x}\left(\left(\begin{array}{l}p\\0_{d_{\nu}}\\\vec{q}\end{array}\right)\right)_{\nu\in \tilde{S}}
=\left(\left(\begin{array}{c}p+f_{\nu}(x_{\nu})\cdot \vec{q}\\\g f_{\nu}(x_{\nu}) \vec{q}\\
\vec{q}\end{array}\right)\right)_{\nu\in \tilde{S}}.$$ 
Let $\vare>0$ be given. Define the diagonal matrix
$$\mathbf{D}=(D_{\nu})_{\nu\in\tilde{S}}=(\diag((a_{\nu}^{(0)})^{-1},(a_{\nu}^*)^{-1},
\cdots,(a_{\nu}^*)^{-1},(a_{\nu}^{(1)})^{-1},\cdots,(a_{\nu}^{(n)})^{-1}))_{\nu\in\ts}$$ 
where $a_{\nu}^{(0)}=\begin{cases}\lceil\delta\rceil_{\nu}\hspace{3mm}\nu\in S\\T_0/\vare\hspace{3mm}\nu=\infty\end{cases}, a_{\nu}^{*}=\lceil K_{\nu}\rceil_{\nu},$
$a_{\nu}^{(i)}=\begin{cases}1\hspace{7mm}\nu\in S\\T_i/\vare\hspace{2mm}\nu=\infty\end{cases}$ $1\le i\le n,$ and for a positive real
number $a$ and $\nu\in S$ we let $\lceil
a\rceil_{\nu}$ (resp. $\lfloor a\rfloor_{\nu}$) denote a power of
$p_{\nu}$ with the smallest (resp. largest) $\nu$-adic norm bigger
(resp. smaller) than $a$. The constants $\delta, K_{\nu}$ and $T_i$ above are as in the statement of theorem~\ref{<}.

The following, which will be proved in section~\ref{secunipotent}, proves theorem~\ref{<}.

\begin{thm}~\label{unipotent}
Let $\mathbf{U}$ and $\mathbf{f}$ be as in theorem~\ref{<}; then
for any $\x=(x_{\nu})_{\nu\in S}$, there exists a neighborhood
$\mathbf{V}=\p V_{\nu}\subseteq \mathbf{U}$ of $\x$, and a
positive number $\alpha$ with the following property: for any
$\mathbf{B}\subseteq\mathbf{V}$ there exists $E>0$ such that for any
$\mathbf{D}=(\diag((a_{\nu}^{(0)})^{-1},(a_{\nu}^*)^{-1},
\cdots,(a_{\nu}^*)^{-1},(a_{\nu}^{(1)})^{-1},\cdots,(a_{\nu}^{(n)})^{-1}))_{\nu\in
\tilde{S}}$ with $1\leq|a_{\infty}^{(i)}|_{\infty},$ $0<|a_{\nu}^{(0)}|_{\nu}\le 1\le
|a_{\nu}^{(1)}|_{\nu}\le\cdots\le |a_{\nu}^{(n)}|_{\nu}$ for all $\nu\in S$ which satisfy
\begin{itemize}
\item[(i)]$0<\p|a_{\nu}^*|_{\nu}\le |a_{\infty}^{(0)}a_{\infty}^{(1)} \cdots
a_{\infty}^{(n)}|_{\infty}^{-1}\p|a_{\nu}^{(0)}a_{\nu}^{(1)} \cdots
a_{\nu}^{(n-1)}|_{\nu}^{-1}.$
\item[(ii)]$1\leq\min_i\left|\frac{1}{a_{\infty}^{(i)}}\right|_{\infty}\p|a_{\nu}^{(0)}|_{\nu}^{-1}$
\end{itemize}
and for any positive number $\vare$,
one has
$$|\{\mathbf{y}\in\mathbf{B}|\hspace{1mm}c(\mathbf{D}\u_{\mathbf{y}}\lambda)<
\vare\hspace{1mm}\mbox{\rm{for some}}\hspace{1mm}\lambda\in\Lambda\setminus\{0\}\}|
\le E\hspace{1mm}\vare^{\alpha}|B|.$$
\end{thm}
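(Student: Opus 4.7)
The plan is to deduce Theorem~\ref{unipotent} from the $S$-arithmetic quantitative non-divergence theorem of Kleinbock--Tomanov (see \cite{KT}), applied to the family of discrete $\bbz_{\ts}$-modules $\mathbf{D}\u_{\mathbf{y}}\Lambda\subset X$ parametrized by $\mathbf{y}\in\mathbf{B}$. That result yields the desired bound $E\vare^{\alpha}|\mathbf{B}|$ as soon as, for every non-zero primitive $\bbz_{\ts}$-submodule $\Delta\subseteq\Lambda$ with a fixed $\bbz_{\ts}$-basis $\mathbf{v}_{1},\dots,\mathbf{v}_{r}$ supplied by Lemma~\ref{base}, two conditions hold on a slightly enlarged ball $\tilde{\mathbf{B}}\supset\mathbf{B}$: a good-function condition \textbf{(G)} that $\mathbf{y}\mapsto c\bigl(\mathbf{D}\u_{\mathbf{y}}(\mathbf{v}_{1}\wedge\cdots\wedge\mathbf{v}_{r})\bigr)$ is $(C,\alpha)$-good on $\mathbf{V}$ with constants uniform in $\Delta$, and a non-divergence condition \textbf{(S)} that the supremum of this function over $\tilde{\mathbf{B}}$ is bounded below by some $\rho>0$ uniform in $\Delta$. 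Hypotheses (i) and (ii) of the theorem will enter precisely through \textbf{(S)}.

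Verification of \textbf{(G)} is the softer step and is performed place by place. Each $\nu$-component of $(D_{\nu}\u_{y_{\nu}})^{\wedge r}$, evaluated on a fixed wedge of basis vectors, is a polynomial expression in the entries $f_{\nu}^{(j)}(y_{\nu})$ and $\partial_{j}f_{\nu}^{(i)}(y_{\nu})$, each rescaled by a diagonal entry of $D_{\nu}$. By non-degeneracy and Corollary~\ref{linear}, every linear combination of $1, f_{\nu}^{(1)},\dots,f_{\nu}^{(n)}$, as well as every norm $\|\sum c_{i}\g f_{\nu}^{(i)}\|_{\nu}$, is $(C,\alpha)$-good on a fixed neighbourhood of $x_{\nu}$. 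Combining this with parts (i)--(iii) of Remark~\ref{remarkgood} and the standard stability of the good property under products (with controlled constants), one obtains \textbf{(G)} on $\mathbf{V}$ after shrinking if necessary, uniformly in $\Delta$.

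Verification of \textbf{(S)} is the heart of the argument and proceeds by induction on $r=\mathrm{rk}\,\Delta$. For $r=1$ with $\mathbf{v}$ corresponding to $(p,0,\vec{q})\in\bbz_{\ts}^{n+1}$, the content $c(\mathbf{D}\u_{\mathbf{y}}\mathbf{v})$ factors over places into contributions from the three blocks of $\u_{y_{\nu}}\mathbf{v}$; the supremum lower bound is derived by comparing the bottom block $\tp\max_{i}|q_{i}/a_{\nu}^{(i)}|_{\nu}$ with hypothesis (ii) via the product formula on $\vec{q}\in\bbz^{n}$ when $\vec{q}\neq 0$, and by a direct computation using $p\neq 0$ when $\vec{q}=0$. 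For $r\ge 2$ the relevant minors of $(\mathbf{D}\u_{y_{\nu}})^{\wedge r}$ feature skew-gradient-type terms $\widetilde{\g}(q\cdot f_{\nu},q'\cdot f_{\nu})$, and Theorem~\ref{tartibat} supplies the uniform positive lower bound on their sup norm over the ball. The main obstacle is precisely the combinatorial bookkeeping in \textbf{(S)}: for a generic primitive $\Delta$ one must match the rescalings $(a_{\nu}^{(0)})^{-1},(a_{\nu}^{*})^{-1},(a_{\nu}^{(i)})^{-1}$ against the size of the coefficients defining $\Delta$, and hypothesis (i)---which balances $\p|a_{\nu}^{*}|_{\nu}$ against the product of the remaining weights---is the precise quantitative assumption that makes this matching go through. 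Beyond this, the argument parallels the templates of \cite{KM}, \cite{KT}, and \cite{MS}, specialized to the case $\infty\notin S$.
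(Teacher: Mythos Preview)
Your overall architecture is the same as the paper's --- apply the Kleinbock--Tomanov quantitative non-divergence (Theorem~\ref{poset}) after verifying a good-function condition and a uniform covolume lower bound --- but the verification of \textbf{(G)} as you describe it does not go through, and the reason points to a genuinely missing idea.

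The function you propose to prove good is $\mathbf{y}\mapsto c\bigl(\mathbf{D}\u_{\mathbf{y}}(\mathbf{v}_1\wedge\cdots\wedge\mathbf{v}_r)\bigr)$, i.e.\ the product over $\nu$ of the max-norm of the full wedge. For $r\ge 2$ the $\nu$-component of this wedge contains coordinates along $e_{\nu}^{*i}\wedge e_{\nu}^{*j}$, and these are $2\times 2$ minors of the matrix $\nabla f_{\nu}$ applied to the $q$-parts of $\mathbf{v}_1,\mathbf{v}_2$, namely expressions of the form $\partial_i(f_{\nu}\!\cdot q)\,\partial_j(f_{\nu}\!\cdot q')-\partial_j(f_{\nu}\!\cdot q)\,\partial_i(f_{\nu}\!\cdot q')$. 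For $r\ge 3$ one gets higher-order minors. None of Corollary~\ref{linear}, Theorem~\ref{tartibat}, or Remark~\ref{remarkgood} covers these: the corollary and the theorem handle only linear combinations and skew gradients of a \emph{pair} $(g_1,g_2)$, and Remark~\ref{remarkgood} gives no ``stability under products''. In fact products of $(C,\alpha)$-good functions are \emph{not} $(C',\alpha')$-good with uniform constants in general, so the sentence ``standard stability of the good property under products (with controlled constants)'' is the point where the argument breaks.

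What the paper does to get around this is to change the norm-like map. Instead of $c(\cdot)$ on wedges, one uses $\theta(\cdot)=\tp\|\pi_{\nu}(\cdot)\|_{\nu}$, where $\pi_{\nu}$ is the quotient by the ideal $\mathcal{I}^*_{\nu}$ generated by all $e_{\nu}^{*i}\wedge e_{\nu}^{*j}$. This projection kills precisely the troublesome higher minors of $\nabla f_{\nu}$; after choosing an orthonormal basis adapted to $(\mathbf{D}\Delta)_{\nu}$, each surviving coordinate of $\pi_{\nu}((\mathbf{H}(\x)\mathcal{Y})_{\nu})$ is either a constant, a linear combination of $1,f_{\nu}^{(1)},\dots,f_{\nu}^{(n)}$, a single gradient $\nabla^*(\cdot)$, or a single skew gradient $\widetilde{\nabla}^*(\cdot,\cdot)$, and now Corollary~\ref{linear} and Theorem~\ref{tartibat} apply term by term, with Remark~\ref{remarkgood}(iii) assembling the max. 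One must of course check that $\theta$ is still norm-like in the sense of Definition~\ref{norm-like} and that $\theta=c$ on rank-one submodules (so the conclusion still controls $c(\mathbf{D}\u_{\mathbf y}\lambda)$); both are straightforward. You also omit the finiteness hypothesis (iii) of Theorem~\ref{poset}, which must be verified separately and again uses the projected expression. Finally, your rank-one lower bound via the bottom block is not the paper's route: the paper bounds from below using the $e_{\nu}^{0}$-coordinate $(w_{\nu}^{(0)}+\sum f_{\nu}^{(i)}w_{\nu}^{(i)})/a_{\nu}^{(0)}$ together with hypothesis~(ii) and the non-degeneracy constant $\rho_1$; your suggested bound via $\max_i|q_i/a_{\nu}^{(i)}|_{\nu}$ does not obviously produce a $\rho$ independent of $\mathbf{D}$.
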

\begin{proof}[Proof of theorem~\ref{<} modulo theorem~\ref{unipotent}]
Choose $\vare$ as in theorem~\ref{<} and define $a_{\nu}^{(i)}$'s, $a_{\nu}^*$ and $\mathbf{D}$ as above. Our assumptions in theorem~\ref{<} guarantee that $\mathbf{D}$ satisfies the conditions above. Now if $\lambda=\left(\left(\begin{array}{l}p
\\0_{d_{\nu}}\\\vec{q}\end{array}\right)\right)_{\nu\in\tilde{S}}$  is \vspace{.75mm}such that $(p,\vec{\q})$ satisfies the conditions in~\ref{<} then we have $c(\mathbf{D}\u_{\mathbf{y}}\lambda)<\vare.$ Recall that $c(\x)\leq \|\x\|_S^{\kappa},$ now $\mathbf{V}$ and $\alpha/\kappa$ as in theorem~\ref{unipotent} satisfy conditions of theorem~\ref{<}.

\end{proof}


\section{Proof of theorem~\ref{unipotent}}~\label{secunipotent}
In the previous section we reduced the proof of theorem~\ref{<} to theorem~\ref{unipotent}. This section contains the proof of the latter. Theorem~\ref{unipotent} is a far reaching quantitative generalization of recurrence properties of unipotent flows on homogenous spaces. We refer to~\cite{KM} for further discussion and complementary remarks. Let us start with the following   

\begin{definition}~\label{norm-like}(cf. ~\cite[Section 6]{KT})

\noindent
Let $\Omega$ be the set of all discrete $\bbz_{\tilde{S}}$-submodules of $\tp\bbq_{\nu}^{m_{\nu}}$. A function $\theta$ from $\Omega$ to the positive real numbers is called a {\it norm-like map} if the following three properties hold:
\begin{itemize}
\item[i)] For any $\Delta, \Delta'$ with $\Delta'\subseteq\Delta$ and the same $\bbz_{\tilde{S}}$-rank, one has $\theta(\Delta)\leq\theta(\Delta')$.
\item[ii)] For any $\Delta$ and $\gamma\not\in\Delta_{\bbq_{\tilde{S}}}$, one has $\theta(\Delta+\bbz_{\tilde{S}}\gamma)\leq\theta(\Delta)\theta(\bbz_{\tilde{S}}\gamma)$.
\item[iii)] For any $\Delta$, the function $g\mapsto\theta(g\Delta)$ is a continuous function of\\ $g\in\GL(\tp\bbq_{\nu}^{m_{\nu}})$. 
\end{itemize}
\end{definition}

\begin{thm}~\label{poset}(cf.~\cite[Theorem 8.3]{KT})

\noindent
Let $\mathbf{B}=\mathbf{B}(\mathbf{x}_0,r_0)\subset\prod_{\nu\in{S}} \bbq_{\nu}^{d_\nu}$ and $\widehat{\mathbf{B}}=\mathbf{B}(\mathbf{x}_0,3^m r_0)$ for $m=\min_{\nu}{(m_{\nu})}.$ Assume that $\mathbf{H}:\widehat{\mathbf{B}}\rightarrow \rm{GL}(\tp \bbq_{\nu}^{m_\nu})$ is a continuous map. Also let $\mathbf{\theta}$ be a norm-like map defined on the set $\Omega$ of discrete $\bbz_{\tilde{S}}$-submodules of $\tp\bbq_{\nu}^{m_\nu},$ and $\mathfrak{P}$ be a subposet of $\Omega$. For any $\Gamma\in\mathfrak{P}$ denote by $\psi_{\Gamma}$ the function $\mathbf{x}\mapsto \mathbf{\theta}(\mathbf{H}(\mathbf{x})\Gamma)$ on $\widehat{\mathbf{B}}.$ Now suppose for some $C,\alpha>0$ and $\rho>0$ one has
\begin{itemize}
\item[(i)] for every $\Gamma\in\mathfrak{P},$ the function $\psi_{\Gamma}$ is $(C,\alpha)$-good on $\widehat{\mathbf{B}};$

\item[(ii)] for every $\Gamma\in\mathfrak{P},\hspace{1mm}\sup_{\mathbf{x}\in \mathbf{B}}\|\psi_{\Gamma}(\mathbf{x})\|_{\tilde{S}}\geq\rho;$

\item[(iii)] for every $\mathbf{x} \in \widehat{\mathbf{B}},\hspace{2mm} \#\{\Gamma\in\mathfrak{P} |\hspace{1mm}\|\psi_{\Gamma}(\mathbf{x})\|_{\tilde{S}}\leq\rho\}<\infty.$
\end{itemize}
Then for any positive $\vare\leq\rho$ one has $$|\{\mathbf{x}\in
\mathbf{B}|\hspace{1mm}\mathbf{\theta}(\mathbf{H}\mathbf{(x)}\lambda)<\vare
\hspace{1mm}\mbox{\rm{for
some}}\hspace{1mm}\lambda\in\Lambda\smallsetminus\{0\}\}|\leq
mC(N_{((d_{\nu}),S)}D^2)^m{(\frac{\vare}{\rho})}^{\alpha}|\mathbf{B}|,$$
where $D$ may be taken to be $\hspace{1mm}
\p(3p_{\nu})^{d_{\nu}},$ and $N_{((d_{\nu}),S)}$ is the Besicovich constant for the space $\p \mathbb{Q}_{\nu}^{d_{\nu}}.$

\end{thm}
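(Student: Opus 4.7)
The plan is to prove the bound by induction on the $\bbz_{\tilde{S}}$-rank of submodules that need to be considered, following the Kleinbock--Margulis dynamical strategy extended to the $S$-arithmetic setting. For the base case I would handle rank-one submodules $\bbz_{\tilde{S}}\lambda$ directly: if $\theta(\mathbf{H}(\mathbf{x})\lambda) < \vare$ then $\psi_{\bbz_{\tilde{S}}\lambda}(\mathbf{x}) < \vare$, so combining hypothesis (ii) (which gives $\sup_{\mathbf{B}}\psi_{\Gamma} \ge \rho$) with hypothesis (i) (the $(C,\alpha)$-good property on $\widehat{\mathbf{B}}$) yields, directly from the definition of a good function, the bound $|\{\mathbf{x} \in \mathbf{B} : \psi_{\Gamma}(\mathbf{x}) < \vare\}| \le C(\vare/\rho)^\alpha |\mathbf{B}|$ for this fixed $\Gamma$. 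This already has the right shape of the final estimate; what the induction does is unify contributions across the infinite family of possible $\Gamma$'s without merely summing.

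For the inductive step, I would, for each $\mathbf{x}$ in the ``bad set,'' select a primitive submodule $\Gamma(\mathbf{x}) \in \mathfrak{P}$ of minimal $\bbz_{\tilde{S}}$-rank satisfying $\psi_{\Gamma(\mathbf{x})}(\mathbf{x}) < \rho$; hypothesis (iii) ensures such a minimum exists. Then I would apply a Besicovich-type covering of $\mathbf{B}$, where the constant $N_{((d_{\nu}),S)}$ enters, to localize to small balls $\mathbf{B}' \subset \widehat{\mathbf{B}}$ on each of which a single $\Gamma$ plays the role of $\Gamma(\mathbf{y})$ for every $\mathbf{y}$ in the ball. For any $\lambda \notin \Gamma_{\bbq_{\tilde{S}}}$ with $\theta(\mathbf{H}(\mathbf{y})\lambda)$ small, axiom (ii) of Definition~\ref{norm-like} gives $\theta(\mathbf{H}(\mathbf{y})(\Gamma + \bbz_{\tilde{S}}\lambda)) \le \psi_{\Gamma}(\mathbf{y}) \cdot \theta(\mathbf{H}(\mathbf{y})\bbz_{\tilde{S}}\lambda)$, which converts a dangerous rank-one phenomenon into a dangerous phenomenon of one higher rank. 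The good-function estimate applied to this extended submodule then feeds back into the inductive hypothesis on $\mathbf{B}'$.

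The dilation from $\mathbf{B}$ to $\widehat{\mathbf{B}} = \mathbf{B}(\mathbf{x}_0, 3^m r_0)$ is dictated by the need to have the $(C,\alpha)$-good hypothesis available on a neighborhood large enough to contain every Besicovich ball produced across all $m$ rounds of induction; this is the source of the factor $D = \prod_{\nu\in S}(3 p_{\nu})^{d_{\nu}}$ raised to the power $m$, while the outer factor of $m$ records the contributions from ranks $1,2,\ldots,m$. The main obstacle I anticipate is the combinatorial bookkeeping: at each step one has to genuinely trade a rank-$k$ problem for a rank-$(k+1)$ problem with a uniform choice of $\Gamma$ on each covering ball, and the constants at successive steps must multiply cleanly into $mC(N_{((d_{\nu}),S)}D^2)^m$ rather than accumulating uncontrollably. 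Axiom (i) of the norm-like map, which controls $\theta$ on equal-rank sub-submodules, justifies replacing an arbitrary submodule by its $\bbq_{\tilde{S}}$-primitive hull in the minimization defining $\Gamma(\mathbf{x})$, while axiom (iii), continuity in $g$, ensures $\psi_\Gamma$ is continuous so that suprema are realized and the Besicovich machinery applies.
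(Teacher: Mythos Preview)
The paper does not give its own proof of this theorem: it is quoted from \cite[Theorem 8.3]{KT}, and the only comment offered is that ``the idea of the proof of theorem~\ref{poset} is very similar to Margulis's proof of recurrence properties of unipotent flows on homogeneous spaces, but the proof is more technical.'' Your outline is precisely that strategy---induction on rank, selection of a minimal-rank ``marked'' submodule at each bad point, a Besicovich covering to freeze the marking on small balls, and the submultiplicativity axiom of the norm-like map to pass from rank $k$ to rank $k+1$---so it is fully in line with what the paper invokes, and there is nothing further to compare.
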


The idea of the proof of theorem~\ref{poset} is very similar to Margulis's proof of recurrence properties of unipotent flows on homogenous spaces, but the proof is more technical. We will prove theorem~\ref{unipotent} using this theorem. However we need to set the stage for using this theorem.

\noindent
\textbf{\textit{The poset:}} let $\Lambda$ be as in section~\ref{sec<} and let $\pfr$ be the poset of primitive $\bbz_{\ts}$-modules of $\Lambda.$ 

\noindent
\textbf{\textit{The norm-like map:}} For any $\nu\in\ts$ we let $\mathcal{I}^{*}_{\nu}$ be the ideal
generated by $\{e_{\nu}^{*i}\wedge e_{\nu}^{*j}\h\mbox{for}\h1\le i,j\le
d_{\nu}\}.$ Note that $\mathcal{I}^{*}_{\infty}=0.$ Let  $\pi_{\nu}:\bigwedge\bbq_{\nu}^{m_{\nu}}\rightarrow\bigwedge\bbq_{\nu}^{m_{\nu}}/\mathcal{I}^{*}_{\nu}$ be the natural projection. For $\x\in\tp\bigwedge\tp\bbq_{\nu}^{m_{\nu}}$ define $\theta(\x)=\tp\theta_{\nu}(x_{\nu})$ where $\theta_{\nu}(x_{\nu})=\para
\pi_{\nu}(x_{\nu})\para_{\pi_{\nu}(\bfr_{\nu})}$ and $\mathfrak{B}_{\nu}$ is the standard basis of $\bigwedge\bbq_{\nu}^{m_{\nu}}.$ Finally for any discrete $\bbz_{\tilde{S}}$-submodule $\Delta$ of $\tp\bbq_{\nu}^{m_{\nu}}$, let $\theta(\Delta)=\theta(\x^{(1)}\wedge\cdots\wedge\x^{(r)})$,
where $\{\x^{(1)},\cdots,\x^{(r)}\}$ is a $\bbz_{\tilde{S}}$-basis of $\Delta$. Using the product formula, it is readily seen that $\theta(\Delta)$ is well-defined. This is our norm-like map.

\vspace{.75mm}

\noindent
\textbf{\textit{The family $\mathcal{H}$:}}
Let $\mathcal{H}$ be the family of functions
$$\bf{H}:\mathbf{U}=\tp U_{\nu}\rightarrow {\rm
GL}(\tp\bbq_{\nu}^{m_{\nu}})\hspace{3mm}\mbox{\rm{where}}\hspace{3mm}
\bf{H}(\x)=\mathbf{D}\u_{\x},$$ 
where $\mathbf{D}$ and $\mathcal{U}_{\x}$ are as in theorem~\ref{unipotent}.

Note that the restriction of $\theta$ to $\tp\bbq_{\nu}^{m_{\nu}}$ is the same as the function $c$. Hence theorem~\ref{poset} reduces the proof of theorem~\ref{unipotent} to finding a neighborhood $\mathbf{V}$ of $\x$ which satisfies the following
\begin{itemize}
\item[(I)] There exist $C,\alpha>0$, such that all the functions
$\mathbf{y}\mapsto\theta(\mathbf{H} (\ybf)\Delta)$, where $\bf{H}\in\mathcal{H}$
 and $\Delta\in\pfr$ are $(C,\alpha)$-good on $\mathbf{V}$.
\item[(II)]For all $\ybf\in\mathbf{V}$ and $\bf{H}\in\mathcal{H}$, one
has
$\#\{\Delta\in\pfr|\hspace{1mm}\theta(\mathbf{H} (\ybf)\Delta)\le1\}<\infty.$
\item[(III)] For every ball $\mathbf{B}\subseteq\mathbf{V}$,
 there exists $\rho>0$ such that $\sup_{\ybf\in\bbf}\theta(\mathbf{H}(\ybf)\Delta)\ge\rho$
 for all $\bf{H}\in\mathcal{H}$ and $\Delta\in\pfr$.
\end{itemize}

If $\x\in\p\bbq_{\nu}^{d_{\nu}}$ define $\mathbf{V}=\p V_{\nu},$ where $V_{\nu}$ is small enough such that assertions of corollary~\ref{linear} and theorem~\ref{tartibat} hold.  We now verify (I), (II), (III) for this choice of $\mathbf{V}$.
\vspace{2mm}

\noindent \textit{\textbf{Proof of (I).}} Let $\Delta$ be a primitive submodule of $\Lambda$ and let $k=\mbox{rank}_{\bbz_{\tilde{S}}}\Delta.$ Denote by
$(\dbf\Delta)_{\nu}$ the $\mathbb{Q}_{\nu}$-span of the projection of $\dbf\Delta$ to the place $\nu\in\ts,$ note that $\dim_{\bbq_{\nu}}(\dbf\Delta)_{\nu}=k$. Let $W_{\nu}$ (resp. $W_{\nu}^*$) be the $\bbq_{\nu}$-span of  $\{e_{\nu}^1,\cdots,e_{\nu}^{n-1}\}_{\bbq_{\nu}}$ (resp. $\{e_{\nu}^{*1},\cdots,e_{\nu}^{*d_{\nu}}\}$). Let $x_{\nu}^{(1)},\cdots,x_{\nu}^{(k-1)}\in (\dbf\Delta)_{\nu}\cap
W_{\nu}\oplus\bbq_{\nu}e_{\nu}^{n}$ be an orthonormal set, see section~\ref{secprelim} for the definition in the non-arthimedean setting. Complete this to an orthonormal basis for $(\dbf\Delta)_{\nu}\oplus\bbq_{\nu}e_{\nu}^0$ by adding $e_{\nu}^0$ and $x_{\nu}^{(0)}$ if needed. Let $\{\ybf^{(1)},\cdots,\ybf^{(k)}\}$ be a $\bbz_{\tilde{S}}$-basis for $\Delta$. 
We have
$\theta(\dbf\Delta)=\theta(\dbf\mathcal{Y})$, where
$\mathcal{Y}=\ybf^{(1)}\wedge\cdots\wedge\ybf^{(k)}$. Let
$a_{\nu},b_{\nu}\in\bbq_{\nu}$ be such that
$$(\dbf\mathcal{Y})_{\nu}=a_{\nu} e_{\nu}^0\wedge
x_{\nu}^{(1)}\wedge\cdots\wedge x_{\nu}^{(k-1)}+b_{\nu}x_{\nu}^{(0)}\wedge\cdots\wedge x_{\nu}^{(k-1)}.$$ 
If $g(x)=(g_1(x),g_2(x))$ for $g_1$ and $g_2$ two functions from an open subset of
$\bbq_{\nu}^{d_{\nu}}$ to $\bbq_{\nu}$ define
$\widetilde{\g}^*(g)(x)=g_1(x)\g^*g_2(x)-g_2(x)\g^*g_1(x)$ where $\g^*\bar
g(x_{\nu})=\sum_{i=1}^{d_{\nu}}\d_i \bar g(x_{\nu})e_{\nu}^{*i}.$ 

Let us also define
$\hat{\mathbf{f}}(\x)=(\hat{f}_{\nu}(x_{\nu}))_{\nu\in S},$ where
$$\hat{f}_{\nu}(x_{\nu})=(1,0_{d_{\nu}},
\frac{a_{\nu}^{(1)}}{a_{\nu}^{(0)}}f_{\nu}^{(1)}(x_{\nu}),\cdots,
\frac{a_{\nu}^{(n)}}{a_{\nu}^{(0)}}f_{\nu}^{(n)}(x_{\nu})).$$ 
Manipulation of the formulas gives 
$$(\dbf\u_{\x}\dbf^{-1})_{\nu}w=w+(\hat{f}_{\nu}(x_{\nu})\cdot
w)e_{\nu}^0+\frac{a_{\nu}^{(0)}}{a_{\nu}^*}\g^*(\hat{f}_{\nu}(x_{\nu})w),$$
whenever $w$ is in $W_{\nu}\oplus\bbq_{\nu}e_{\nu}^n$. Therefore
we have

$$\pi_{\nu}((H(\x)\mathcal{Y})_{\nu})
=(a_{\nu}+b_{\nu}\hat{f}_{\nu}(x_{\nu})x_{\nu}^{(0)})e_{\nu}^0\wedge
x_{\nu}^{(1)}\wedge\cdots\wedge
x_{\nu}^{(k-1)}+b_{\nu}x_{\nu}^{(0)}\wedge\cdots\wedge
x_{\nu}^{(k-1)}$$
$$\hspace*{10mm}+b_{\nu}\sum_{i=1}^{k-1}
\pm(\hat{f}_{\nu}(x_{\nu})x_{\nu}^{(i)})e_{\nu}^0\wedge\bigwedge_{s\neq
i}
x_{\nu}^{(s)}+b_{\nu}\frac{a_{\nu}^{(0)}}{a_{\nu}^*}\sum_{i=0}^{k-1}
\pm\g^*(\hat{f}_{\nu}(x_{\nu})x_{\nu}^{(i)})\wedge\bigwedge_{s\neq
i} x_{\nu}^{(s)}$$

\begin{equation}~\label{eq:norm}+\frac{a_{\nu}^{(0)}}{a_{\nu}^*}\sum_{i=1}^{k-1}\pm
\widetilde{\g}^*(\hat{f}_{\nu}(x_{\nu})x_{\nu}^{(i)},a_{\nu}+b_{\nu}\hat{f}_{\nu}(x_{\nu})x_{\nu}^{(0)})
\wedge e_{\nu}^0\wedge\bigwedge_{s\neq
0,i}x_{\nu}^{(s)}\hspace{7mm}\end{equation}

$$+b_{\nu}\frac{a_{\nu}^{(0)}}{a_{\nu}^*}\sum_{i,j=1, j>i}^{k-1}\pm
\widetilde{\g}^*(\hat{f}_{\nu}(x_{\nu})x_{\nu}^{(i)},\hat{f}_{\nu}(x_{\nu})x_{\nu}^{(j)})
\wedge e_{\nu}^0\wedge\bigwedge_{s\neq
i,j}x_{\nu}^{(s)}.\hspace{7mm}$$

\noindent
The orthogonality assumption gives that the norm of the above vector would
be the maximum of norms of each of its summands. Hence we need to show each summand is a good function. Note that there is nothing to prove in the case $\nu=\infty.$ If $\nu\in S$ however our choice of $\mathbf{V}$ and conditions on $\fbf$ guarantee that we may apply corollary~\ref{linear} and theorem~\ref{tartibat} hence each summand is $(C_{\nu},\alpha_{\nu})$-good as we wanted.


\vspace{2mm}
\noindent\textit{\textbf{Proof of (II).}} First line in
equation~(\ref{eq:norm}), gives that
$$\theta(\dbf\u_{\x}\Delta)\ge
\prod_{\nu\in\tilde{S}}\max\{|a_{\nu}+b_{\nu}\hat{f_{\nu}}(x_{\nu})\cdot x_{\nu}^{(0)}|,|b_{\nu}|\}$$ 
Thus
$\theta(\dbf\u_{\x}\Delta)\le 1$ implies\vspace{1mm} that
$\tp\max\{|a_{\nu}|,|b_{\nu}|\}$ has an upper bound. Hence corollary 7.9 of \cite{KT} finishes the proof of (II).

\vspace{2mm}
\noindent\textit{\textbf{Proof of (III).}} Let
$\bbf\subseteq\mathbf{V}$ be a ball containing $\x.$ Define 
$$\rho_1=\inf\{|f_{\nu}(x_{\nu})\cdot
C_{\nu}+c_{\nu}^{0}|_{\nu}\hspace{1mm}|\hspace{1mm}\x\in\bbf, \nu\in S,
C_{\nu}\in\bbq_{\nu}^n,
\para C_{\nu}\para=1, c_{\nu}^{0}\in\bbq_{\nu}\},$$
$$\rho_2=\inf\{\sup_{\x\in\bbf}\para\g
f_{\nu}(x_{\nu})C_{\nu}\para\hspace{1mm} |\nu\in S,
C_{\nu}\in\bbq_{\nu}^n, \para C_{\nu}\para=1\},\hspace{3cm}$$
Further let  $M=\sup_{\x\in\bbf}\max\{\para\fbf(\x)\para_S,\para\g\fbf(\x)\para_S\}$ and $\rho_3$ be the constant obtained by theorem~\ref{tartibat}(a).

Assume first that $\rank_{\bbz_{\tilde{S}}}\Delta=1.$ Hence $\Delta$ can be represented by a
vector $\mathbf{w}=(w_{\nu})_{\nu\in S}$, with $w^{i}_{\nu}\in\mathbb{Z}_{\tilde{S}}$ for all $i$'s and any $\nu\in\tilde{S}$. Now
$$c(\dbf\u_{\x}\wbf)\ge\min_i\left|\frac{1}{a_{\infty}^{(i)}}\right|_{\infty}\h\p\left|\frac{w_{\nu}^{(0)}+\sum_{i=1}^n
f_{\nu}^{(i)}(x_{\nu})w_{\nu}^{(i)}}{a_{\nu}^{(0)}}\right|_{\nu}\ge\rho_1^{\kappa}.$$ 
The proof in this case is complete.

Hence we may assume $\rank_{\bbz_{\tilde{S}}} \Delta=k>1$. With the notations as in part (I) let $x_{\nu}^{(1)},\cdots,x_{\nu}^{(k-2)}$ be an orthonormal set in $W_{\nu}\cap \Delta_{\nu}.$ We extend this to an orthonormal set in $(W_{\nu}\oplus \bbq_{\nu}e_{\nu}^n)\cap\Delta_{\nu}$ by adding $x_{\nu}^{(k-1)}$. Now if necessary choose a vector $x_{\nu}^{(0)}$ such that $\{e_{\nu}^0,x_{\nu}^{(0)},x_{\nu}^{(1)},\cdots,x_{\nu}^{(k-1)}\}$ is an orthonormal basis for $\Delta_{\nu}+\bbq_{\nu}e_{\nu}^0$.
Let $\mathcal{Y}=\ybf^{(1)}\wedge\cdots\wedge\ybf^{(k)}$ be as before. Since $D_{\nu}$ leaves $W_{\nu}, W_{\nu}^*, \bbq_{\nu} e_{\nu}^0$, and $\bbq_{\nu}e_{\nu}^n$ invariant, one has
$$\theta(\dbf\u_{\x}\Delta)=\theta(\dbf\u_{\x}\mathcal{Y})=\tp\theta_{\nu} (D_{\nu}\u_{\x}^{\nu}\mathcal{Y}_{\nu})=\tp \|D_{\nu}\pi_{\nu}(\u_{\x}^{\nu}\mathcal{Y}_{\nu})\|_{\nu}.$$
Let $a_{\nu},b_{\nu}\in\bbq_{\nu}$ be so that
$$\mathcal{Y}_{\nu}=a_{\nu} e_{\nu}^0\wedge
x_{\nu}^{(1)}\wedge\cdots\wedge
x_{\nu}^{(k-1)}+b_{\nu}x_{\nu}^{(0)}\wedge\cdots\wedge
x_{\nu}^{(k-1)}.$$
Note that $\tp\{|a_{\nu}|_{\nu},|b_{\nu}|_{\nu}\}\ge 1$. Let $\check{\fbf}(\x)=(\cf_{\nu}(x_{\nu}))_{\nu\in\ts}$ where $$\cf_{\nu}(x_{\nu})=(1,0_{d_{\nu}},f_{\nu}^{(1)}(x_{\nu}),\cdots,f_{\nu}^{(n)}(x_{\nu})).$$ 
We have

$$\pi_{\nu}(\u^{\nu}_{\x}\mathcal{Y}_{\nu})
=(a_{\nu}+b_{\nu}\cf_{\nu}(x_{\nu})x_{\nu}^{(0)})e_{\nu}^0\wedge
x_{\nu}^{(1)}\wedge\cdots\wedge
x_{\nu}^{(k-1)}+b_{\nu}x_{\nu}^{(0)}\wedge\cdots\wedge
x_{\nu}^{(k-1)}$$
$$\hspace*{10mm}+b_{\nu}\sum_{i=1}^{k-1}
\pm(\cf_{\nu}(x_{\nu})x_{\nu}^{(i)})e_{\nu}^0\wedge\bigwedge_{s\neq
i}
x_{\nu}^{(s)}+b_{\nu}\sum_{i=0}^{k-1}
\pm\g^*(\cf_{\nu}(x_{\nu})x_{\nu}^{(i)})\wedge\bigwedge_{s\neq
i} x_{\nu}^{(s)}$$
$$+e_{\nu}^0\wedge \check{\mathcal{Y}}_{\nu}(x_{\nu}),\hspace{7cm}$$
$$\mbox{where}\hspace{2mm}\check{\mathcal{Y}}_{\nu}(x_{\nu})=\sum_{i=1}^{k-1}\pm
\widetilde{\g}^*(\cf_{\nu}(x_{\nu})x_{\nu}^{(i)},a_{\nu}+b_{\nu}\cf_{\nu}(x_{\nu})x_{\nu}^{(0)})
\wedge\bigwedge_{s\neq 0,i}x_{\nu}^{(s)}\hspace{7mm}$$
$$+b_{\nu}\sum_{i,j=1, j>i}^{k-1}\pm
\widetilde{\g}^*(\cf_{\nu}(x_{\nu})x_{\nu}^{(i)},\cf_{\nu}(x_{\nu})x_{\nu}^{(j)})
\wedge\bigwedge_{s\neq
i,j}x_{\nu}^{(s)}.\hspace{7mm}$$
\textit{Calim:} For all $\nu\in S$ one has
$$\sup\|e_{\nu}^n\wedge\check{\mathcal{Y}}_{\nu}(x_{\nu})\|_{\nu}\geq \rho_0\cdot\max\{|a_{\nu}|_{\nu},|b_{\nu}|_{\nu}\}.$$
\textit{Proof of the claim:} Let $\nu\in S$ we have
$$e_{\nu}^n\wedge\check{\mathcal{Y}}_{\nu}(x_{\nu})=\pm z_{\nu}^{(*)}(x_{\nu})\wedge e_{\nu}^n\wedge x_{\nu}^{(1)}\wedge x_{\nu}^{(2)}\cdots\wedge x_{\nu}^{(k-2)}+\hspace{1mm}\begin{array}{l}\mbox{\rm{other terms where one}}\\ \mbox{\rm{or two}}\hspace{1mm} x_{\nu}^{(i)}\hspace{1mm}\mbox{\rm{are missing,}}\end{array}$$ where

\vspace{.5mm}
$z_{\nu}^{(*)}(x_{\nu})={\widetilde{\nabla}}^{*}(\check{f}_{\nu}(x_{\nu})x_{\nu}^{k-1},\hspace{1mm}a_{\nu}+b_{\nu}\check{f}_{\nu}(x_{\nu})x_{\nu}^{(0)}) $ $$=b_{\nu}{\widetilde{\nabla}}^{*}(\check{f}_{\nu}(x_{\nu})x_{\nu}^{k-1},\hspace{1mm}\check{f}_{\nu}(x_{\nu})x_{\nu}^{(0)})-a_{\nu}{\nabla}^{*}(\check{f}_{\nu}(x_\nu)x_{\nu}^{(k-1)})$$ Using the first expression it follows that $\sup_{x_{\nu}\in B_{\nu}}\|z_{\nu}^{(*)}(x_{\nu})\|_{\nu}\geq \rho_3 \hspace{1mm}|b_{\nu}|_{\nu},$ and the second expression gives, $\hspace{1mm}\sup_{x_{\nu}\in B_{\nu}}\|z_{\nu}^{(*)}(x_{\nu})\|_{\nu}\geq \rho_2|a_{\nu}|_{\nu}-2M^2|b_{\nu}|_{\nu}.$  Thus there exists $\rho_0$
such that $$\max\{\rho_2|a_{\nu}|_{\nu}-2M^2|b_{\nu}|_{\nu},\rho_3 \hspace{1mm}|b_{\nu}|_{\nu}\}\geq \rho_0\cdot\max\{|a_{\nu}|_{\nu},|b_{\nu}|_{\nu}\}.$$ This shows the claim.

Let $\nu\in S$ be any place then $\|D_{\nu}(e_{\nu}^n\wedge\check{\mathcal{Y}}_{\nu}(x_{\nu}))\|_{\nu}\leq\|D_{\nu}\check{\mathcal{Y}}_{\nu}(x_{\nu})\|_{\nu}/|a_{\nu}^{(n)}|_{\nu}.$ Hence ${(a_{\nu}^{(*)}a_{\nu}^{(n-k+2)}\cdots a_{\nu}^{(n)})}^{-1}$  
is the eigenvalue with the smallest norm of $D_{\nu}$ on $W_{\nu}^{*}\wedge (\bigwedge^{k-1}(\bbq_{\nu} e_{\nu}^0\oplus W_{\nu}\oplus\bbq_{\nu}e_{\nu}^n)).$

\vspace{1mm}
Let $\mathcal{R}=\frac{\max\{|a_{\infty}|,|b_{\infty}|\}}{|a_{\infty}^{(0)}a_{\infty}^{(1)}\cdots a_{\infty}^{(n)}|_{\infty}}$ we have 
$$\hspace{1mm}\sup_{x\in\mathbf{B}} \theta(\mathbf{D}\mathcal{U}_{\mathbf{x}}\mathcal{Y})\ge\mathcal{R}\p\|D_{\nu}(e_{\nu}^0\wedge \check{\mathcal{Y}}_{\nu}(x_{\nu}))\|_{\nu}\ge\mathcal{R}\p\frac{|a_{\nu}^{(n)}|_{\nu}\|D_{\nu}(e_{\nu}^n\wedge\check{\mathcal{Y}}_{\nu}(x_{\nu}))\|_{\nu}}{|a_{\nu}^{(0)}|_{\nu}}$$
$$\ge\mathcal{R}\p\frac{|a_{\nu}^{(n)}|_{\nu}\|e_{\nu}^n\wedge\check{\mathcal{Y}}_{\nu}(x_{\nu})\|_{\nu}}{|a_{\nu}^{(0)}a_{\nu}^{(*)}a_{\nu}^{(n-k+3)}\cdots a_{\nu}^{(n)}|_{\nu}}\ge$$ $$\frac{\rho_0^{\kappa}\max\{|a|_{\infty},|b|_{\infty}\}}{|a_{\infty}^{(0)}a_{\infty}^{(1)}\cdots a_{\infty}^{(n)}|_{\infty}}\p\frac{\max\{|a|_{\nu},|b|_{\nu}\}}{|a_{\nu}^{(0)}a_{\nu}^{(*)}a_{\nu}^{(n-k+3)}\cdots a_{\nu}^{(n-1)}|_{\nu}}\geq \rho_0^{\kappa}.$$
This finishes the proof of part (III). 

\vspace{1mm}
As mentioned before now theorem~\ref{poset} completes the proof of theorem~\ref{unipotent}.




\section{Regular systems~\label{secregular}}
In this section we will prove theorem~\ref{regular} and will state a general result about regular systems, theorem~\ref{approximation}. Trough out this section $\U$ and $\fbf$ will be as in the theorem~\ref{regular}. Let us first recall the definition of {\it regular system of resonant sets} this is a generalization of the concept of regular system of points of Baker and Schmidt for the real line. 

\begin{definition}~\label{rsystem}(cf.~\cite[Definition 3.1]{BBKM})
Let $\U$ be an open subset of $\bbq_{\nu}^d,$ $\mathcal{R}$ be a family of subsets of $\bbq_{\nu}^d,$ $N:\mathcal{R}\rightarrow\bbr_+$ be a function and let $s$ be a number satisfying $0\leq s<d.$ The triple $(\mathcal{R}, N, s)$ is called a {\it regular system} in $\U$ if there exists constants $K_1, K_2, K_3>0$ and a function $\lambda:\bbr_+\rightarrow\bbr_+$ with $\lim_{x\rightarrow\infty}\lambda(x)=+\infty$ such that for any ball $\mathbf{B}\subset\U$ and for any $T>T_0=T_0(\mathcal{R}, N, s, \mathbf{B})$ is a sufficiently large number, there exists 
$$R_1,\cdots,R_t\in\mathcal{R}\hspace{3mm}\mbox{with}\hspace{3mm}\lambda(T)\leq N(R_i)\leq T\hspace{3mm}\mbox{for}\hspace{3mm}i=1,\cdots,t$$
and disjoint balls
$$\bbf_1,\cdots,\bbf_t\hspace{3mm}\mbox{with}\hspace{3mm}2\bbf_i\subset\bbf\hspace{3mm}\mbox{for}\hspace{3mm}i=1,\cdots,t$$
such that 
$$\mbox{diam}(\bbf_i)=T^{-1}\hspace{3mm}\mbox{for}\hspace{3mm}i=1,\cdots,t$$
$$t\geq K_1|\bbf|T^d$$
and such that for any $\gamma\in\bbr$ with $0<\gamma<T^{-1}$ one has
$$K_2\gamma^{d-s}T^{-s}\leq|\bbf(R_i,\gamma)\cap\bbf_i|$$
$$|\bbf(R_i,\gamma)\cap2\bbf_i|\leq K_3\gamma^{d-s}T^{-s}$$
where $\bbf(R_i,\gamma)$ is the $\gamma$ neighborhood of $R_i.$

The elements of $\mathcal{R}$ will be called {\it resonant sets}.

\end{definition}

\noindent
The construction of the desired regular system, which in some sense is the main result of this section, will make essential use of the following.
 
\begin{thm}~\label{upperbound}
Let $\fbf:\U\rightarrow\bbq_{\nu}^n$ be a non degenerate map at $\x\in\U.$ Then there exists a sufficiently small ball $\mathbf{B}_0\subset\U$ centered at $\x_0$ and a constant $C_0>0$ such that for any ball $\mathbf{B}\subset\mathbf{B}_0$ and any $\delta>0,$ for all sufficiently large $Q,$ one has $$|\mathcal{A}_{\fbf}(\delta;\mathbf{B};Q)|\leq C_0\delta|\mathbf{B}|,$$ where $$\mathcal{A}_{\fbf}(\delta;\mathbf{B};Q)=\bigcup_{\tq\in\bbz^{n+1}:0<\|\tq\|_{\infty}\leq Q}\{\x\in\mathbf{B}|\h|\tq\cdot\fbf(\x)|<\delta Q^{-n-1}\}$$
 \end{thm}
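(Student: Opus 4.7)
The plan is to combine the calculus lemma (Theorem~\ref{>}) with the dynamical estimate (Theorem~\ref{<}), both specialized to the single-place setting $S=\{\nu\}$, $\kappa=1$. Take $\mathbf{B}_0$ to be the neighborhood of $\x_0$ supplied by Theorem~\ref{<}, shrunk if necessary so that the normalizations on $\fbf$ and $\g\fbf$ stated before Theorem~\ref{khintchine} hold on $\mathbf{B}_0$. Fix $\epsilon\in(0,1/2)$ and partition
$$\mathcal{A}_{\fbf}(\delta;\mathbf{B};Q)=\mathcal{A}_1\cup\mathcal{A}_2,$$
where $\mathcal{A}_1$ consists of those $\x$ admitting a witness $\tq=(\q,q_0)$ with $\|\q\g\fbf(\x)\|_\nu>\|\tq\|_\infty^{-\epsilon}$, and $\mathcal{A}_2$ consists of those $\x$ admitting a witness with $\|\q\g\fbf(\x)\|_\nu\le\|\tq\|_\infty^{-\epsilon}$.

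The piece $\mathcal{A}_1$ is controlled directly by Theorem~\ref{>} applied with $T=Q+1$: the requirements $|\tq\cdot\fbf(\x)|_\nu<\delta T^{-n-1}$ and $|\tq|_\infty<T$ are met (the harmless shift $Q\mapsto Q+1$ is absorbed into the final constant), producing $|\mathcal{A}_1|\le C\delta|\mathbf{B}|$.

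For $\mathcal{A}_2$, a dyadic decomposition in $\|\tq\|_\infty$ reduces matters to Theorem~\ref{<}. Let $\mathcal{A}_2^{(j)}$ be the sub-locus on which a witness $\tq$ has $2^{j-1}<\|\tq\|_\infty\le 2^j$, for $0\le j\le j_*:=\lceil\log_2 Q\rceil$. I would apply Theorem~\ref{<} with $T_0=\cdots=T_n=2^j$, $K_\nu=2^{-j\epsilon}$, and the role of $\delta$ there played by $\delta_0=\delta Q^{-n-1}$. The hypothesis $\delta_0\le 1/T_i$ reduces to $\delta\le Q^n$, and the product hypothesis $\delta_0 T_0\cdots T_n K_\nu\le 1$ becomes $\delta\cdot 2^{j(n+1-\epsilon)}Q^{-n-1}\le\delta Q^{-\epsilon}$; both are satisfied for $Q$ sufficiently large (with $\delta$ fixed). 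The resulting $\varepsilon_j$ is dominated by $\delta^{1/(n+1)}\cdot 2^{j(1-\epsilon/(n+1))}/Q$, whose maximum over $0\le j\le j_*$ is $\delta^{1/(n+1)}Q^{-\epsilon/(n+1)}$. Summing the estimate of Theorem~\ref{<} over the $O(\log Q)$ scales,
$$|\mathcal{A}_2|\le E\,(\log Q)\,\delta^{\alpha/(n+1)}Q^{-\epsilon\alpha/(n+1)}|\mathbf{B}|,$$
which tends to $0$ as $Q\to\infty$ for any fixed $\delta>0$. Hence $|\mathcal{A}_2|\le\delta|\mathbf{B}|$ for all $Q$ large enough (depending on $\delta$ and $\mathbf{B}$), and taking $C_0=C+1$ finishes the proof.

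The main obstacle is careful bookkeeping: verifying the hypotheses of Theorem~\ref{<} uniformly across all dyadic scales, and ensuring that the negative power $Q^{-\epsilon\alpha/(n+1)}$ dominates the extraneous $\log Q$ factor so that the contribution of $\mathcal{A}_2$ is truly negligible compared to $\delta|\mathbf{B}|$. No new analytic ingredient beyond Theorems~\ref{>} and~\ref{<} is required; the proof is essentially a packaging of the two main estimates of the paper in the regime appropriate for constructing regular systems.
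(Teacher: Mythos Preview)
Your proposal is correct and follows the same strategy as the paper: choose $\mathbf{B}_0$ so that both Theorems~\ref{>} and~\ref{<} apply, split $\mathcal{A}_\fbf(\delta;\mathbf{B};Q)$ according to whether the witness $\tq$ satisfies $\|\q\nabla\fbf(\x)\|_\nu>\|\tq\|_\infty^{-\epsilon}$ or not, bound the first piece by Theorem~\ref{>} and the second by Theorem~\ref{<}. The paper's proof is extremely terse at the second step---it simply asserts the bound $C_2(\delta Q^{-\epsilon})^{\alpha/(n+1)}|\mathbf{B}|$ for $\mathcal{A}^2$ (with an apparent sign typo in the printed exponent)---whereas you spell out the dyadic decomposition in $\|\tq\|_\infty$ that actually produces this estimate. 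Your extra $\log Q$ factor from crude summation over scales is harmless (a geometric-series argument removes it, matching the paper's stated bound), and in any case the negative power $Q^{-\epsilon\alpha/(n+1)}$ dominates, so the conclusion is the same.
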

 
 \begin{proof}
Let $\mathbf{B}_0$ be an open ball around $\x_0$ for which the assertions of theorems~\ref{>} and~\ref{<} hold. We will show $\mathbf{B}_0$ satisfies the conclusion of the theorem. For any $\mathbf{B}\subset\mathbf{B}_0$ let $$\mathcal{A}^1(\delta;\mathbf{B};Q;\epsilon)=\left\{\x\in\mathbf{B}|\h\exists\hspace{.5mm}\tilde{\q},\h\|\tilde{\q}\|_{\infty}<Q,\h\begin{array}{l}|\tilde{\q}\cdot\mathbf{f}(\x)|_{\nu}\h<\delta Q^{-n-1} \\  \|\q\g\mathbf{f}(\x)\|_{\nu}>|\tq|_{\infty}^{-\epsilon}\end{array}\right\}$$
and $$\mathcal{A}^2(\delta;\mathbf{B};Q;\epsilon)=\left\{\x\in\mathbf{B}|\hspace{1mm}\exists
\tilde{\q},\h \|\tq\|<Q:\begin{array}{l}|\tilde{\q}\cdot\mathbf{f}(\x)|<\delta Q^{-n-1}\\
\|\g f_{\nu}(x)q\|_{\nu}\leq|\tq|_{\infty}^{-\epsilon}\end{array}\right\}.$$ One obviously has $\mathcal{A}(\delta;\mathbf{B};Q)\subset\mathcal{A}^1(\delta;\mathbf{B};Q;\epsilon)\cup\mathcal{A}^2(\delta;\mathbf{B};Q;\epsilon).$ Now one applies the bounds from theorems~\ref{>} and~\ref{<} for $|\mathcal{A}^1(\delta;\mathbf{B};Q;\epsilon)|$ and $|\mathcal{A}^2(\delta;\mathbf{B};Q;\epsilon)|$ respectively. These give 
$$|\mathcal{A}^1(\delta;\mathbf{B};Q;\epsilon)|\leq C_1\delta|\bbf|\hspace{3mm}\mbox{and}\hspace{3mm}|\mathcal{A}^1(\delta;\mathbf{B};Q;\epsilon)|\leq C_2(\delta Q^{\epsilon})|^{\frac{\alpha}{n+1}}\bbf|$$
where $\alpha>0.$ Combining these, one get the desired bound for $\mathcal{A}(\delta;\mathbf{B};Q).$

\end{proof}
 
We are now ready to prove theorem~\ref{regular}.

\vspace{1mm}
\noindent
\textbf{\textit {Proof of the theorem~\ref{regular}.}} Thanks to the non-degeneracy assumption, replacing $\U$ with a smaller neighborhood, we may and will assume $\fbf_1(\x)=\x_1.$ Moreover, we can choose $\mathbf{B}_0$ such that theorem \ref{upperbound} holds. Therefore the aforementioned theorem will guarantee that for any $\mathbf{B}\subset\mathbf{B}_0,$ one has
\begin{equation}~\label{equregular}|\mathcal{G}(\mathbf{B};\hspace{.5mm}\delta;\hspace{.5mm} Q)|\geq\frac{1}{2}|\mathbf{B}|\end{equation} for large enough $Q,$ where $$\mathcal{G}(\mathbf{B};\hspace{.5mm}\delta;\hspace{.5mm} Q)=\frac{3}{4}\mathbf{B}\setminus\mathcal{A}_{\fbf}(\frac{3}{4}\mathbf{B};\hspace{.5mm}\delta;\hspace{.5mm}Q)$$ 

\noindent
Let $\x\in\mathcal{G}((\mathbf{B};\hspace{.5mm}\delta;\hspace{.5mm} Q))$, applying Dirichlet's principle argument one gets an absolute constant $C$ such that for sufficiently large $Q$ one can solve the following system of inequalities$$\begin{cases}|\q\cdot\fbf(\x)+q_0|_{\nu}<C\delta^2Q^{-n-1}\\|q_i|_{\infty}<\delta^{-1} Q\hspace{7mm}i=0, 1,\cdots, n\\|q_i|_{\nu}<\delta\hspace{1.5cm}i=2,\cdots, n\end{cases}$$
This thanks to the fact that $\x\in\mathcal{G}((\mathbf{B};\hspace{.5mm}\delta;\hspace{.5mm} Q))$ says $T=\delta^{-n-1}Q^{n+1}$ will satisfy $Q^{n+1}\leq N(R_{\q,q_0})\leq T.$
 \vspace{.5mm}

\noindent
\textbf{\textit {First claim:}} Let $(\q,q_0)$ satisfy the above system of inequalities.  Define the function $\bff(\x)=\q\cdot\fbf(\x)+q_0,$ then one has $|\partial_1\bff(\x)|_{\nu}>\frac{\delta}{2}.$ 

Assume the contrary so $|\partial_1\bff(\x)|_{\nu}\leq\frac{\delta}{2}.$ This assumption gives $|q_1|_{\nu}<\delta.$ Now since we have $|\q\cdot\fbf(\x)+q_0|_{\nu}<C\delta^2Q^{-n-1},$ if $Q$ is sufficiently large, we will have $|q_0|_{\nu}<\delta.$ This says that we can replace $(\q,q_0)$ by $(\q',q'_0)=\frac{1}{p_{\nu}^l}(\q,q_0)$ and have $$\begin{cases}|\q'\fbf(\x)+q'_0|_{\nu}<C\delta Q^{-n-1}\\ |q'_i|_{\infty}<Q\hspace{7mm}i=0, 1,\cdots,n\end{cases}$$ 
This however contradicts our assumption that $\x\in\mathcal{G}((\mathbf{B};\hspace{.5mm}\delta;\hspace{.5mm} Q)).$ Hence we have that $|\partial_1\bff(\x)|_{\nu}>\frac{\delta}{2}.$ The first claim is proved. 
\vspace{1mm}

\noindent
\textbf{\textit{Second claim:}} There exists $\z\in R_{\q,q_0}$ such that $|\z-\x|_{\nu}<2C\delta Q^{-n-1},$ for large enough $Q.$

\vspace{1mm}
Using uniform continuity and the ultrametric inequality we get that there exists $r_1>0$ such that if $\|\x-\ybf\|_{\nu}<r_1$ then $|\partial_1\bff(\ybf)|_{\nu}>\frac{\delta}{2}.$ As $\x\in\frac{3}{4}\mathbf{B}$ we have $\mathbf{B}(\x,\mbox{diam}\h\mathbf{B})\subset\mathbf{B}.$ Define $r_0=\min(r_1,\mbox{diam}\h\mathbf{B}),$ so we have $|\partial_1\bff(\ybf)|_{\nu}>\frac{\delta}{2}$ for all $\ybf\in\mathbf{B}(\x,r_0).$ 

\noindent
Now if $\x=(\x_1,\cdots,\x_d)$ and $|\theta|_{\nu}<r_0$ then $\x_{\theta}=(\x_1+\theta,\x_2,\cdots,\x_d)\in \mathbf{B}(\x,r_0).$ Let $g(\theta)=\bff(\x_{\theta}).$ Then 
$$|g(0)|_{\nu}=|\bff(\x)|_{\nu}<C\delta^{2}Q^{-n-1}\hspace{2mm}\mbox{and}\hspace{2mm}|g'(0)|_{\nu}=|\partial_1\bff(\x)|_{\nu}>\frac{\delta}{2}.$$ 
We now apply Newton's method and get; There exists $\theta_0$ such that $g(\theta_0)=0$ and $|\theta_0|_{\nu}<2C\delta Q^{-n-1}.$ So if $Q>\frac{2C}{\delta^{1/{n+1}}}$ then we have $\x_{\theta_0}\in \mathbf{B}(\x,r_0).$ \vspace{.5mm}Hence there is $\z\in R_{\q,q_0}$ with $|\z-\x|_{\nu}<2C\delta Q^{-n-1}.$

\vspace{1mm}
\noindent
\textbf{\textit{Third claim:}} There is a constant $K_2$ so that for any $0<\gamma<T^{-1}$ we have $$K_1\gamma T^{-(d-1)}\leq |\mathbf{B}(R_{\q,q_0},\gamma)\cap\mathbf{B}(\z,T^{-1}/2)|$$ 

If $d=1$ we are done by taking $K_1=1/2$ so we assume $d>1.$ Let $\z=(\z_1, \cdots, \z_d)$ and $\z'=(\z_2, \cdots, \z_d)$ where $\z$ is as in second claim above.  Now for any $\ybf'=(\ybf_2,\cdots,\ybf_d)\in\bbq_{\nu}^{d-1}$ such that $|\ybf'-\z'|_{\nu}<C_1T^{-1}$ let $\ybf=(y_1,\ybf')=(y_1, \ybf_2,\cdots, \ybf_d)$ where $y_1\in\bbq_{\nu}.$ If $|y_1-\z_1|_{\nu}\leq T^{-1}/4$ then $\ybf\in \mathbf{B}(\z\hspace{.5mm},T^{-1}/4).$ 

\vspace{1mm}
We now want to show that for any $\ybf'$ with $|\ybf'-\z'|_{\nu}<C_1T^{-1}$ one can find $y_1(\ybf')\in\bbq_{\nu}$ such that $\ybf=(y_1(\ybf'),\ybf')\in R_{\q,q_0}\cap\mathbf{B}(\z,T^{-1}/4).$ First note that $\mathbf{B}(\z,T^{-1})\subset\mathbf{B}(\x,r_0).$ So if $|y_1-\z_1|_{\nu}<T^{-1}/4$ then $(y_1,\ybf')\in \mathbf{B}(\x,r_0).$ This thanks to our previous observations gives $|\partial_1\bff(\ybf)|_{\nu}>\delta/2.$ Now the Mean value theorem gives $$\bff(\ybf)=\bff(\z)+\nabla\bff(\z)\cdot(\ybf-\z)+\sum_{i,j}\Phi_{ij}(\bff)(\ybf_i-\z_i)(\ybf_j-\z_j).$$ Comparing the maximum of the norms using $\bff(\z)=0$ and $|\ybf'-\z'|_{\nu}<C_1T^{-1}$ we get that if $|y_1-\z_1|_{\nu}<T^{-1}/4$ then $|\bff(\ybf)|_{\nu}<T^{-1}/4.$ Again Newton's method helps to find $y_1(\ybf')$ with $|y_1(\ybf')-\z_1|_{\nu}\leq T^{-1}/4$ such that $\bff(y_1(\ybf'),\ybf')=0.$

\noindent
For any $0<\gamma<T^{-1}$ define $$\mathcal{A}(\gamma)=\{(\ybf_1(\ybf')+\mathbf{\theta},\ybf')|\h \|\ybf'-\z'\|_{\nu}<C_1T^{-1},\h |\mathbf{\theta}|_{\nu}\leq\gamma/2\}$$
The above gives $\mathcal{A}(\gamma)\subset \mathbf{B}(R_{\q,q_0},\gamma)\cap\mathbf{B}(\z,T^{-1}/4).$ So an application of Fubini finishes the proof of the third claim.

\vspace{1mm}
The proof of the theorem now goes as in~\cite{BBKM}, we recall the steps here for the sake of completeness. Assume $Q$ is large enough so that theorem~\ref{upperbound} holds. Choose a collection
$$(\q_1,q_{0,1},\z_1),\cdots,(\q_t,q_{0,t},\z_t)\in(\bbz^n\setminus\{0\})\times\bbz\times\bbf\hspace{2mm}\mbox{with}\hspace{2mm}\z_i\in R_{\q_i,q_{0,i}}$$ 
such that
$$Q^{n+1}=T\delta^{n+1}\leq N(R_{\q_i,q_{0,i}})\leq T=\delta^{-n-1}Q^{n+1}\hspace{2mm}(1\leq i\leq t)$$
and such that for any $\gamma$ with $0<\gamma<T^{-1}$ we have
$$K_2\gamma T^{-(d-1)}\leq |\bbf(R_{\q_i,q_{0,i}},\gamma)\cap\bbf(\z_i,T^{-1}/2)|\hspace{2mm}(1\leq i\leq t)$$
$$ |\bbf(R_{\q_i,q_{0,i}},\gamma)\cap\bbf(\z_i,T^{-1})|\geq K_3\gamma T^{-(d-1)}\hspace{4mm}(1\leq i\leq t)$$
Now by our above discussion for any point $\x\in\mathcal{G}(\bbf;\delta;Q)$ there is a triple
$$(\q,q_{0},\z)\in(\bbz^n\setminus\{0\})\times\bbz\times\bbf\hspace{2mm}\mbox{with}\hspace{2mm}\z\in R_{\q,q_{0}}$$
which satisfies the above claims. Since $t$ was chosen to be maximal there is an index $i\in\{1,\cdots,t\}$ such that
$$\bbf(\z_i,T^{-1}/2)\cap\bbf(\z,T^{-1}/2)\neq\emptyset$$
As a result we have $\|\z-\z_i\|<T^{-1}/2.$ This together with the second claim above gives $\|\x-\z_i\|<C_2T^{-1}.$ Thus
$$\mathcal{G}(\bbf;\delta;Q)\subset\bigcup_{i=1}^{i=t}\h\bbf\h(\z_i,C_2T^{-1})$$ 
This inclusion plus~(\ref{equregular}) above give
$$|\bbf|/2\leq|\mathcal{G}(\bbf;\delta;Q)|\leq t\cdot|\bbf(\mathbf{0},C_2)|T^{-d}$$
Therefore $t\geq K_1|\bbf|T^d,$ where $K_1=|2\bbf(\mathbf{0},C_2)|.$

Now $R_i=R_{\q_i,q_{0,i}}$ and $\bbf_{i}=\bbf(\z_i,T^{-1}/2)$ serve as the resonant sets and the desired balls in the definition~\ref{rsystem}. This finishes the proof of theorem~\ref{regular}.

\vspace{2mm}
The following is a general result on regular systems which is theorem 4.1 in \cite{BBKM}. The proof in there is only given for $\bbr^d$ however the same proof works for $\bbq_{\nu}^d$ and we will not reproduce the proof here.

\begin{thm}~\label{approximation}(cf.~\cite[Theorem 4.1]{BBKM})

\noindent
Let $\U$ be an open subset of $\bbq^d,$ and let $(\mathcal{R}, N, S)$ be a regular system in $\U.$ Let $\widetilde{\Psi}:\bbr_+\rightarrow\bbr_+$ be a non-increasing function such that the sum $$\sum_{k=1}^{\infty}k^{d-s-1}\widetilde{\Psi}(k)^{d-s}$$
diverges. Then for almost all points $\x\in\U$ the inequality 
$$\mbox{\rm{dist}}(\x,R)<\widetilde{\Psi}(N(R))$$
has infinitely many solutions $R\in\mathcal{R}.$
\end{thm}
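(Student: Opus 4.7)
The plan is to reduce to showing that the set
$$W := \{\x \in \U \,|\, \mbox{dist}(\x,R) < \widetilde{\Psi}(N(R)) \mbox{ for infinitely many } R \in \mathcal{R}\}$$
has positive Lebesgue measure in every ball $\bbf \subset \U$. Once this is established, the Lebesgue density theorem upgrades positive measure to full measure: were $|\U \setminus W| > 0$, almost every point of $\U\setminus W$ would be a density point and hence lie in a small ball inside which $W$ has arbitrarily small relative mass, contradicting the preceding for that ball.

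To produce positive measure in a fixed ball $\bbf \subset \U$, I would build a Cantor-like subset of $W\cap\bbf$. First, choose a rapidly increasing dyadic sequence of scales $T_n = 2^n T_0$; by the Cauchy condensation test applied to the non-increasing function $k \mapsto k^{d-s-1}\widetilde{\Psi}(k)^{d-s}$, the divergence hypothesis becomes
$$\sum_{n} T_n^{d-s}\widetilde{\Psi}(T_n)^{d-s} = \infty.$$
Then construct inductively a nested family $\mathcal{K}_0 = \{\bbf\} \supset \mathcal{K}_1 \supset \mathcal{K}_2 \supset \cdots$, where each $\mathcal{K}_n$ is a collection of pairwise disjoint closed balls of diameter $T_n^{-1}$, every element of $\mathcal{K}_n$ is contained in a parent ball from $\mathcal{K}_{n-1}$, and every element of $\mathcal{K}_n$ lies inside the $\widetilde{\Psi}(N(R))$-neighborhood of some $R \in \mathcal{R}$ with $N(R) \leq T_n$. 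At the inductive step, given a parent $\bbf' \in \mathcal{K}_{n-1}$, apply the regular-system axiom on $\bbf'$ with parameter $T_n$ (taken large enough so that $\lambda(T_n) > T_{n-1}$ and $\widetilde{\Psi}(T_n) < T_n^{-1}$); this produces $t \geq K_1 |\bbf'| T_n^d$ resonant sets $R_i$ with associated disjoint sub-balls $\bbf_i' \subset \bbf'$ of diameter $T_n^{-1}$, and the $K_2$ lower bound gives
$$|\bbf(R_i, \widetilde{\Psi}(T_n)) \cap \bbf_i'| \geq K_2\, \widetilde{\Psi}(T_n)^{d-s} T_n^{-s}.$$
I would then pack each such neighborhood with balls of diameter $T_n^{-1}$ and enrol them into $\mathcal{K}_n$.

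To conclude, equip $K_\infty := \bigcap_n \bigcup_{B \in \mathcal{K}_n} B$ with the natural mass distribution $\mu$ apportioning mass uniformly among the children at every generation; the divergence of $\sum_n T_n^{d-s}\widetilde{\Psi}(T_n)^{d-s}$ lets one compare $\mu$ with Lebesgue measure on each ball of diameter $T_n^{-1}$ and deduce $|K_\infty| > 0$. Since every $\x \in K_\infty$ lies within $\widetilde{\Psi}(T_n) \geq \widetilde{\Psi}(N(R_n))$ of some $R_n \in \mathcal{R}$ with $N(R_n) \to \infty$, this gives $K_\infty \subset W$ and completes the positive-measure step.

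The main obstacle is the scale mismatch: the thickness $\widetilde{\Psi}(T_n)$ of a resonant neighborhood is typically much smaller than the diameter $T_n^{-1}$ of the enclosing ball, so each neighborhood must be subdivided combinatorially into many children and the count per generation must match the lower bound $T_n^{d-s}\widetilde{\Psi}(T_n)^{d-s}$ in order to keep $\mu(K_\infty)$ bounded below. Getting this bookkeeping right is exactly the content of the Beresnevich--Bernik--Kleinbock--Margulis argument in \cite[\S 4]{BBKM}. The passage from $\bbr^d$ to $\bbq_\nu^d$ is mostly cosmetic, since in an ultrametric space two balls are either disjoint or nested (which simplifies the covering combinatorics) and $\nu$-adic balls of radius $r$ have Haar measure proportional to $r^d$ exactly as in the Archimedean case.
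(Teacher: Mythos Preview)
The paper does not give its own proof of this theorem: it simply states the result, cites \cite[Theorem~4.1]{BBKM}, and remarks that the argument there, written for $\bbr^d$, carries over verbatim to $\bbq_\nu^d$. Your outline is precisely a sketch of the BBKM argument (Cantor-type construction driven by the regular-system axioms, followed by a mass-distribution/density upgrade), together with the observation that ultrametric balls make the covering combinatorics cleaner. So your approach is the same as the one the paper defers to; there is nothing further to compare.

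One small wording issue: when you write ``pack each such neighborhood with balls of diameter $T_n^{-1}$'', note that $\bbf(R_i,\widetilde{\Psi}(T_n))\cap\bbf_i'$ already sits inside a single ball of diameter $T_n^{-1}$, so no further packing at that scale is possible. In the BBKM scheme the balls $\bbf_i$ themselves are the generation-$n$ children, and the resonant-neighborhood lower bound $K_2\,\widetilde{\Psi}(T_n)^{d-s}T_n^{-s}$ enters only when estimating the measure retained in the limsup, not in the packing step. This is exactly the bookkeeping you flag at the end, so the outline is fine once that sentence is adjusted.
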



\section{Proofs of the main theorems}~\label{secmain}
We finally come to the proofs of theorems~\ref{khintchine} and~\ref{divergent}.  
\subsection{Proof of the convergence part}
Take $\x_0\in \mathbf{U}$.  Choose a neighborhood $\mathbf{V}\subseteq
\mathbf{U}$ of $\x_0$ and a positive number $\alpha$, as in
theorem~\ref{<}, and pick a ball $\bbf=\p B_{\nu}\subseteq
\mathbf{V}$ containing $\mathbf{x}_0$ such that the ball with the
same center and triple the radius is contained in $\mathbf{U}$. We
will show that $\bbf\cap \mathcal{W}_{\mathbf{f},\Psi}$ has measure
zero. For any $\tq\in \bbz^{n+1}\setminus\{0\}$, let
$$A_{\tq}=\{(x_{\nu})_{\nu\in S}\in B |\hspace{1mm}
|(f_{\nu}(x_{\nu}))\cdot
\tq|_S<\Psi(\tq)\}.$$
We need to prove that the set of points $\mathbf{x}$ in $\bbf$ which belong to infinitely many
$A_{\tq}$ for ${\tq}\in\bbz^{n+1}\setminus\{0\}$ has measure zero. Now let
$$A_{\ge
\tq}=\{\mathbf{x}\in A_{\tq}|\hspace{1mm}\para\g
f_{\nu}({x}_{\nu})q\para_{\nu}\ge |\tq|_{\infty}^{-\epsilon}\}
 \hspace{2mm}\&$$
$$A_{<\tq}=A_{\tq}\setminus A_{\ge\tq}.$$
Furthermore for any $t\in\mathbb{N}$ let 
$$\begin{array}{ll}\bar{A}_{\ge t}=\bigcup_{\tq\in\bbz^{n+1}, 2^{t}\le
\|\tq\|_{\infty}<2^{t+1}}A_{\ge\tq}\hspace{4mm} \& & \hspace{3mm}
\bar{A}_{<{t}}=\bigcup_{\tq\in\bbz^{n+1}, 2^{t}\le
\|\tq\|_{\infty}<2^{t+1}}A_{<\tq}\end{array}$$

\noindent
Recall that $\Psi$ is decreasing. Hence using theorem~\ref{>}, with $$2^{t}\leq\|\tq\|_{\infty}\leq2^{t+1}\h \mbox{and} \hspace{1.5mm} \delta=2^{t(n+1)}\Psi(2^t,\cdots,2^t),$$
we see that $|\bar{A}_{\geq{t}}|\leq C2^{t(n+1)}\Psi(2^{t},\cdots,2^{t}).$ Now a use of  Borel-Cantelli lemma, gives that almost every $\x\in\B$ is in at most finitely many sets $A_{\ge\tq}.$

\vspace{1mm}
We will be done if we show that the sum of the
measures of $\bar{A}_{<{t}}$'s  is convergent.\vspace{.75mm}
\noindent
The conditions posed on $\Psi$ imply easily that
$\Psi(\tq)\le\|\tq\|_{\infty}^{-(n+1)} $\vspace{1mm}
for large enough $\para\tq\para_{\infty}$. So if $2^{t}\le \|\tq\|_{\infty}<2^{t+1}$
then $\Psi(\tq)\le 2^{-t(n+1)}$ for large enough $t$. Now for such ${t}$ we may write $\bar{A}_{{t}}=\cup_{\nu\in S}\bar{A}_{{t},\nu}$ where each $\bar{A}_{{t},\nu}$ is contained
in the set defined in~\ref{<}, with $\delta=2^{-t(n+1)},\hspace{1mm}T_i=2^{t+1},\hspace{1mm}K_{\nu}=2^{-\epsilon t}$ and $K_{\omega}=1$ for $\omega\in S\setminus\{\nu\}$. It is not hard to verify the inequalities in the hypothesis of theorem~\ref{<}. Moreover, one has
 $$\vare^{n+1}=\max\{\delta^{n+1},\delta^{\kappa} T_0\cdots T_n\p K_{\nu}\}=2^{-\epsilon t}\delta^{\kappa} T_0\cdots T_n=C'2^{-\epsilon t},$$
 for some universal constant $C.$  So by theorem~\ref{<} and the choice of $\mathbf{V}$ and $\bbf$ measure of
 $\bar A_{{t}}$ is at most
 $$C 2^{\frac{-\alpha\epsilon t}{\kappa(n+1)}}|\bbf|.$$
 Therefore the sum of measures of $\bar A_{{t}}$'s is
 finite, thus another use of Borel-Cantelli lemma completes the proof of theorem~\ref{khintchine}.
 
\subsection{Proof of the divergence part.} Replacing $\U$ by a smaller neighborhood, if needed, we assume that theorem~\ref{regular} holds for $\U.$ Define now the sequence  $$\widetilde{\Psi}(x)=x^{-n/(n+1)}\psi(x^{1/(n+1)})$$
As $\psi$ was non-increasing we get that $\widetilde{\Psi}$ is non-increasing as well, and we have
$$\sum_{k=1}^{\infty}k^{d-s-1}\widetilde{\Psi}(k)^{d-s}=\sum_{k=1}^{\infty}\widetilde{\Psi}(k)=\sum_{\ell=1}^{\infty}\sum_{(\ell-1)^{n+1}<k\leq\ell^{n+1}}\widetilde{\Psi}(k)\geq$$
$$\sum_{\ell=1}^{\infty}\sum_{(\ell-1)^{n+1}<k\leq\ell^{n+1}}\ell^{-n}\psi(\ell)\geq\sum_{\ell=1}^{\infty}\psi(\ell)=\infty$$
Now theorem~\ref{approximation} says, for almost every $\x\in\U$ there are infinitely many elements $(\q,q_0)\in\bbz^n\times\bbz$ satisfying 
\begin{equation}~\label{distance}\mbox{\rm{dist}}(\x,R_{\q,q_0})<\widetilde{\Psi}(\|\tq\|_{\infty}^{n+1})\end{equation}
this says, there is a point $\z\in R_{\q,q_0}$ such that 
$$\|\x-\z\|_{\nu}<\widetilde{\Psi}(\|\tq\|_{\infty}^{n+1}).$$
We now apply Mean value theorem
$$\bff(\x)=\bff(\z)+\nabla\bff(\x)\cdot(\x-\z)+\sum_{i}\Phi_{ij}(\bff)(t_i,t_j)\cdot(\x_i-\z_i)(\x_j-\z_j).$$
Recall that we have $\|\tq\|_{\nu}\leq1,$ $\|\nabla\fbf\|_{\nu}\leq1,$ $|\Phi_{i,j}(\fbf)|_{\nu}\leq1$ and $\bff(\z)=0.$ Hence we get
\begin{equation}~\label{tamam}|\tq\cdot\fbf(\x)|_{\nu}=|\bff(\x)|_{\nu}\leq\|\x-\z\|_{\nu}<\widetilde{\Psi}(\|\tq\|_{\infty}^{n+1})=\|\tq\|_{\infty}^{-n}\psi(\|\tq\|_{\infty})=\Psi(\tq)\end{equation}
Note that for almost every $\x\in\U$ there are infinitely many $(\q,q_0)\in\bbz^{n}\times\bbz,$ which satisfy ~(\ref{distance}). So we get that for almost every $\x\in\U$ there are infinitely many $(\q,q_0)\in\bbz^n\times\bbz$ satisfying $(\ref{tamam}).$ This completes the proof of theorem~\ref{divergent}.

\section{A few remarks and open problems}\label{secremarks}
\textbf{1.} In this article, we worked with product of non-degenerate $p$-adic analytic manifolds. However most of the argument is valid for the product of non-degenerate $C^k$ manifolds. The only part in which we use analyticity extensively is in the proof of theorem~\ref{tartibat}. 

\textbf{2.} The divergence result in here was obtained for the case $S$ is a singleton only. However the convergence part of this paper which gives the simultaneous approximation should be optimal and the divergence should hold in the more general setting of the simultaneous approximation as well. 

\textbf{3.} We considered measured supported on non-degenrate manifolds in here. There are other natural measures that one can consider. Indeed D.~Y.~Kleinbock, E.~Lindenstrauss, B.~Weiss in~\cite{KLW} proved extremality of non-planer fractal measures. It is interesting to prove a Khintchine type theorem for fractal measures.

{\sc Dept. of Math., Univ. of Chicago, Chicago, IL, 60615}

{\em E-mail address:} {\tt amirmo@math.uchicago.edu}

{\sc Dept. of Math, Princeton Univ., Princeton, NJ, 08544}

{\em E-mail address:} {\tt asalehi@Math.Princeton.EDU}

\end{document}